\numberwithin{equation}{section}
\newtheorem{Theorem}{Theorem}[section]
\newtheorem{Definition}{Definition}[section]
\newtheorem{Lemma}{Lemma}[section]
\newtheorem{Corollary}{Corollary}[section]
\newtheorem{Remark}{Remark}[section]
\begin{document}
	\begin{CJK}{UTF8}{gbsn}
	\allowdisplaybreaks[4]
	\title{The global existence and blowup of the classical solution to the relativistic dust in a FLRW geometry}
	\author{Xianshu Ju$^{1}$, Xiangkai Ke$^{1}$, Changhua Wei$^{1,2,}$\footnote{\footnotesize Corresponding author: chwei@zstu.edu.cn}\\[12pt]
		\emph {\scriptsize $^{1}$Department of Mathematics, Zhejiang Sci-Tech University, Hangzhou, 310018, China.}\\
		\emph {\scriptsize $^{2}$Nanbei Lake Institute for Artificial Intelligence in Medicine, Jiaxing, 314300, China.}}

	\date{}
	
	\maketitle
	\begin{abstract}
		This paper is concerned with the global existence and blowup of the classical solution to the Cauchy problem of  the relativistic Euler equation
		with $ p=0 $ in a fixed Friedmann-Lema\^{\i}tre-Robertson-Walker (FLRW)  spacetime. The aim of this work is to study clearly the effect of the expansion rate of the spacetime on the life span of the classical solution to the pressureless fluid. Since the density and the velocity of the relativistic dust admits the same principal part, we can obtain much more accurate results by the characteristic method rather than energy estimates.
		\vskip 2mm
		\noindent{\bf Key words and phrases}:  FLRW; accelerated expanding spacetime; relativistic Euler equation; characteristics; energy estimate.
		\vskip 2mm
		\noindent{\bf 2010 Mathematics Subject Classification}: 35L40, 35L65.
		
	\end{abstract}
	\section{Introduction}
	In the standard coordinates $(t,x^1,\cdots,x^n)$, the relativistic Euler equation evolving in a fixed Lorenzian spacetime $(M, g)$ can be described as
	\begin{equation}\label{1.1}
		\nabla_{\mu}T_{\nu}^{\mu} = 0,
	\end{equation}
	where Greek indices $\mu$ are assigned values from $\{0,1,\cdots,n\}$. The operator $\nabla_{\mu}$ denotes the covariant derivative with respect to the given metric $g=(g_{\mu\nu})$. $T^{\mu}_\nu$ denotes the energy-momentum tensor, which are given by
	\begin{equation*}\label{1.2}
		T^{\mu}_\nu = (\rho c^{2} + p)u_{\nu}u^{\mu} + pg_{\nu}^{\mu},
	\end{equation*}
	where $\rho$ represents the energy density and $p=p(\rho)$ denotes the fluid pressure. $u=(u^{0},u^{1},\cdots,u^{n})$ is the $1+n$-velocity, which is future directed ($u^0>0$) and timelike vector field normalized by
	\begin{equation*}\label{1.3}
		u_{\mu} u^{\mu}=g_{\mu\nu}u^\mu u^\nu=-1.
	\end{equation*}
	
	In this article, we will focus on analyzing the relativistic Euler equation on general expanding Friedmann-Lema\^{\i}tre-Robertson-Walker (FLRW) spacetime $(M,g)$ with
	\begin{equation*}
		M=[0,+\infty)\times \mathbb R^n,
	\end{equation*}
	and
	\begin{equation}\label{1.4}
		g=-dt^2+a^2(t)\sum_{i=1}^n(dx^i)^2,
	\end{equation}
	where $a(t)>0$ is known as the scale factor and has been used to explain the expansion of our universe by cosmologists.
	
	Throughout the whole paper, repeated upper and lower indices are always summed over their ranges and the index is raised and lowered by \eqref{1.4}.
	
	By the hyperbolic nature of the relativistic Euler equation, the classical solution to \eqref{1.1} in Minkowski spacetime $(a(t)=const.)$ with general initial data always blows up in finite time no matter how small and smooth the initial data are. Indeed, so many results have been obtained to study the blowup phenomena of the initial value problem of system \eqref{1.1}. When $n=1$, the research on shock formation has been investigated by many mathematicians, one can refer to \cite{Courant} for more details. When $n>1$, a breakthrough has been made by Christodoulou \cite{Christodoulou}, in which he considered the compressible and irrotational relativistic Euler equations with constant entropy. By combining the techniques of geometry and analysis, Christodoulou understood the mechanism of the formation of singularities and constructed the weak solution (shocks) with Lisibach \cite{Christodoulou-Lisibach} under the assumption of spherical symmetry. Recently, his method has been investigated deeply and applied to various models, see his work with Miao \cite{Christodoulou-Miao} for classical Euler equations, Miao and Yu \cite{Miao-Yu} for a single variational wave equation, Holzegel, Klainerman, Speck and Wong \cite{H-K-S-W} and refernces theirin for more general quasilinear wave equations.

	To serve our purpose of this paper, we first turn to the fully nonlinear future stability of Friedmann-Lema\^{\i}tre-Robertson-Walker (FLRW) solutions with the positive cosmological constant which is an exact solution to the Einstein matter field equation and has been well studied over the last two decades. Ringstr\"{o}m \cite{Ringstrom} investigated the future global non-linear stability in
	the case of Einstein's equations coupled to a non-linear scalar field $T^{\mu\nu}=\partial^{\mu}\Psi\partial^{\nu}\Psi-[\frac{1}{2}g_{\mu\nu}\partial^{\mu}\Psi\partial^{\nu}\Psi+V(\Psi)]g^{\mu\nu}$, under the assumption that $V(0)>0,\, V^{'}(0)=0,\,V^{''}(0)>0$. $V(\Psi)$ plays the role of the positive cosmological constant, which appears in the most of the following works. The main observation of this paper is that the problem under consideration describes the accelerated expansion of the universe. The associated spacetime expansion creates damping effects, resulting in exponential solution decay. Inspired by Ringstr\"{o}m's work, Rodnianski and Speck \cite{Rodnianski} established the future non-linear stability of these FLRW solutions with positive cosmological constant and linear equation of state $p=K\rho$ under the condition $0 < K < 1/3$ and the assumption of zero fluid vorticity. After that,  Had\v{z}i\'c and Speck \cite{H-J} and Speck \cite{Speck1} answered that this future non-linear stability result remains true
	for fluids with non-zero vorticity and also for the equation of state $p = 0$.
	By employing the conformal method, Oliynyk \cite{Oliynyk1} gave an alternative and fast proof for non-linear future stability problems of FLRW solutions based on conformal singular hyperbolic formulations of the Einstein-Euler equations. For the dust case, the stability of the FLRW-spacetimes with more general spatial manifolds were obtained by Friedrich \cite{Friedrich}.
	
	We emphasize that the above results are obtained by assuming a linear equation of state $p=K\rho$ for the perfect fluids. LeFloch and Wei \cite{LeFloch-Wei} investigated the Einstein-Chaplygin fluids with equation for state $p=-\frac{A}{\rho^{\alpha}}$ with $0<\alpha\leq 1$. This model is widely investigated by physicists as a candidate for  dark energy. In \cite{LeFloch-Wei}, they considered the irrotational fluids, such that the matter is indeed a scalar field, under which the mechanism for the accelerated expansion of the spacetime is the negativity of the pressure (an analogue of the positive cosmological constant). Later, Liu and Wei \cite{Liu-Wei} considered the stabilizing effect of exponential inflation on general fluids and provided a criterion for the fluids. And as a byproduct, the results in \cite{Liu-Wei} can be applied to one of the most important fluids such as polytropic gases $p=K\rho^{\gamma}$ in mathematics and physics.

	When the spacetime admits a power law inflation $a(t)=t^{\alpha}$, Fajman, Ofner, Olinyk and Wyatt \cite{F-T, F-T1} considered the stabilization of the fluids $p=K\rho, (0<K<\frac{1}{3})$ when $\alpha=1$ with and without vorticity, respectively. In the recent work of Fajman, Maliborski, Ofner, Oliynyk and Wyatt \cite{FOOW24}, a phase transition from stable to unstable fluid behavior for cosmological spacetimes undergoing decelerated expansion is given. Through analytical and numerical studies, they also get the transaction between two regions is located on the critial line $K_{crit}(\alpha)=1-\frac{2}{3\alpha}$ when $0<\alpha<1$ and $0\leq K\leq \frac{1}{3}$. Later, Fajman, Ofner, Oliynyk and Wyatt \cite{FOOW25} proved the asymptotically stable of the quiet fluid solution for fluids with non-vanishing speed of sound when $K<K_{crit}$. For the fluids with nonlinear equations of state such as Chaplygin gas, Wei in \cite{Wei} gave the relation between the expanding rate and the speed of the fluids.
	
	Before stating the main problems and the main results of this work, we give a classification of the scale factor $a(t)$ according to the expanding rate.
	\begin{Definition} We call the spacetime is expanding if the scale factor $a(t)$ in \eqref{1.4} satisfies
		$ \dot{a}(t) = \frac{da(t)}{dt} > 0$;
		we call it accelerated expansion, if $a(t)$ satisfies further
		$ \ddot{a}(t) = \frac{d^{2}a(t)}{dt^{2}} > 0$.
	\end{Definition}
	
	To study the relationship between the expanding rate of spacetime and the long-time behavior of the relativistic fluids, we mainly consider the following four cases
	\begin{itemize}
		\item  Assumption $H_{1}$: $\dot{a}(t) > 0 $ and $\ddot{a}(t) \geq 0. $
		
		\item Assumption $H_{2}$: $\dot{a}(t) > 0 $ and $\ddot{a}(t) < 0 $, we further assume that there exists a positive constant $ 0 < \delta_0 < 1 $ such that $ a(t)\geq (1+t)^{\frac{1+\delta_0}{2}}$.
		\item  Assumption $H_{3}$: $\dot{a}(t) > 0 $ and $\ddot{a}(t) < 0 $, and there exists a positive constant $ 0<l \leq\frac{1}{2}$ such that $ a(t) \sim (1 + t)^{l}.$
		\item Assumption $ H_{4}$: $a(t) = (1+t)^0\equiv 1 .$
	\end{itemize}
	\begin{Remark}
		The Assumption $H_1$
		indicates that the spacetime is experiencing accelerated expansion. The difference between Assumption $H_2$ and $H_3$ is that $\int_0^{\infty}a^{-2}(t)dt<+\infty$ if $a(t)$ satisfies Assumption $H_2$. When $a(t)\equiv1$, the background spacetime is turned into the Minkowski one.
	\end{Remark}
	
	In this paper, we are interested in the Cauchy problem for the relativistic fluids with  $ p=0 $,  expanding \eqref{1.1} directly, we can get
	\begin{equation}\label{ME1}
		\begin{cases}
			\frac{\partial}{\partial t}\left(\frac{a(t)\rho}{c^{2}-|v|^{2}}\right) + \sum_{i=1}^{n}\frac{\partial}{\partial x^{i}}\left(\frac{\rho v^{i}}{c^{2}-|v|^{2}}\right) + \left(\frac{n\dot{a}(t)\rho}{c^{2}-|v|^{2}} - \frac{\dot{a}(t)\rho}{c^{2}}\right) = 0,\\
			\frac{\partial}{\partial t}\left(\frac{a(t)\rho v^{i}}{c^{2}-|v|^{2}}\right) + \sum_{j=1}^{n}\frac{\partial}{\partial x^{j}}\left(\frac{\rho v^{i}v^{j}}{c^{2}-|v|^{2}}\right) + \left(\frac{n\dot{a}(t)\rho v^{i}}{c^{2}-|v|^{2}}\right) = 0,\\
			t=0 : \rho(0,x^{1}, x^{2},\cdots, x^{n}) = \varepsilon\rho_{0}(x^{1}, x^{2},\cdots, x^{n}), \\
			t=0 : v(0,x^{1},x^{2},\cdots, x^{n}) = \varepsilon v_0(x^{1},x^{2},\cdots, x^{n}),
		\end{cases}
	\end{equation}
	where $ v^{i} = ca(t)\frac{u^{i}}{u^{0}}$ denotes the classical velocity of the fluids and $c$ is a constant representing the speed of light, $ (\rho_0,v_0)\in C^1(\mathbb{R}^n)\times C^2(\mathbb{R}^n) $, and $\rho_0$ has a bounded 
	$C^1$  norm, while $v_0$ has a bounded $C^2$
	  norm, $ \varepsilon $ denotes a small parameter. For the detailed derivation of \eqref{ME1}, one can refer to \cite{LeFloch} or \cite{Huo-Wei, Huo-Wei1}.

	We assume that system \eqref{ME1} admits a classical solution, then we can derive the following relativistic Burgers equations from \eqref{ME1} studied in \cite{LeFloch} and \cite{Huo-Wei, Huo-Wei1}.
	\begin{equation}\label{equation}
		\begin{cases}
			\frac{\partial v^i}{\partial t}+\sum_{j=1}^{n}\frac{v^j}{a(t)}\frac{\partial v^i}{\partial x^j}+\frac{\dot{a}(t)}{a(t)}v^i-\frac{\dot{a}(t)}{a(t)}\frac{|v|^2}{c^2}v^i=0,\quad i=1,2,\cdots,n,\\
			t=0 : v(0,x^{1},x^{2},\cdots, x^{n}) = \varepsilon v_0(x^{1},x^{2},\cdots. x^{n}).
		\end{cases}
	\end{equation}
	
	We do not care about the singular equation caused by $a^{-1}(t)$, so without loss of generality, we assume that $ a(0) = 1 $ in the following discussions.

	Define the matrix
	\begin{eqnarray}
		V(t,\alpha)&=&I+\left(\frac{\partial v_0}{\partial \alpha}\right)\frac{c^3\varepsilon}{[c^2-\varepsilon^2|v_0|^2]^{3/2}}\int_{0}^{t}\frac{a^{-2}(s)}{[1+f_0^2(\alpha)a^{-2}(s)]^{3/2}}\mathrm{d}s\nonumber\\
		&-&(|v_0|^2I-v_0^Tv_0)\left(\frac{\partial v_0}{\partial \alpha}\right)\frac{c\varepsilon^3}{[c^2-\varepsilon^2|v_0|^2]^{3/2}}\int_{0}^{t}\frac{a^{-2}(s)-a^{-4}(s)}{[1+f_0^2(\alpha)a^{-2}(s)]^{3/2}}\mathrm{d}s，
	\end{eqnarray}
	where $ f_0(\alpha)=\frac{\varepsilon |v_0(\alpha)|}{\sqrt{c^2-\varepsilon^2|v_0(\alpha)|^2}}$, $I$ denotes the identity matrix.
	
	Our first result is on the relativistic Burgers equation \eqref{equation}.
	\begin{Theorem} \label{thm:1}
		When $ n \geq 1 $, there exists a positive constant $ \varepsilon_{0} $, such that for any $ \varepsilon\in [0, \varepsilon_{0}]$, the
		life span of the Cauchy problem \eqref{equation} satisfies
		\begin{equation}\label{MR1}
			T(\varepsilon)=\left\{
			\begin{aligned}
				&\infty,    &&    \text{if}\; \text{Assumption} \; H_{1}\; \text{or} \; H_{2} \; holds,\\ &\varepsilon^{\frac{P}{2l-1}}-1,&& \text{if} \;\text{Assumption}\; H_{3}  \; \text{holds} \; \text{but} \; l\neq\frac{1}{2} \; or \;  \text{Assumption}\;H_{4} \; \text{holds},\\
				&e^\frac{P}{\varepsilon}-1,&& if \; a(t) = (1+t)^{1/2},\;
			\end{aligned}
			\right.
		\end{equation}
		where $P$ is  a positive constant independent of  $ \varepsilon $.
		
		Furthermore, under the Assumption $  H_{3}$,
		we have
		
		$(1)$ When $ n=1 $, the system admits a globally classical solution if $ \inf_{\alpha}v_0'(\alpha)\geqslant0  $, and if there exists $\alpha$ such that $ \inf_{\alpha}v_0'(\alpha)<0 $, $v_x$ tends to infinity when $t$ tends to $T(\varepsilon)$ defined in \eqref{MR1}.
		
		$ (2) $ When $ n>1 $, the system admits a globally classical solution if and only if $\det\left(V(t,\alpha)\right)>0$ for all $t>0$ and $\alpha\in \mathbb R^n$.
	\end{Theorem}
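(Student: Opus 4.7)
The plan is to use the method of characteristics, exploiting the fact that every component $v^i$ in \eqref{equation} has the same transport principal part. Let $X(t,\alpha)$ satisfy $\dot X=v(t,X)/a(t)$ with $X(0,\alpha)=\alpha$, and set $V^i(t,\alpha):=v^i(t,X(t,\alpha))$. Along the characteristic the system collapses to the ODE $\dot V^i = -(\dot a/a)V^i(1-|V|^2/c^2)$; since the scalar factor on the right is independent of $i$, the ratios $V^i/V^j$ are preserved and the direction is frozen, $V^i/|V|=v_0^i(\alpha)/|v_0(\alpha)|$. For $|V|$ itself the substitution $F:=|V|/\sqrt{c^2-|V|^2}$ linearizes the ODE to $\dot F=-(\dot a/a)F$, yielding $F(t,\alpha)=f_0(\alpha)/a(t)$ after using $a(0)=1$. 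Combining these gives the closed form
\[ V^i(t,\alpha)\;=\;\frac{c\varepsilon\,v_0^i(\alpha)}{\sqrt{c^2-\varepsilon^2|v_0(\alpha)|^2}}\cdot\frac{1}{\sqrt{a^2(t)+f_0^2(\alpha)}}, \]
so that $X(t,\alpha)=\alpha+\int_0^t V(s,\alpha)/a(s)\,ds$ is known explicitly.

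Next I would differentiate $X^i$ in $\alpha^j$, which produces the identity $\delta_{ij}$, a term from the prefactor $v_0^i/\sqrt{c^2-\varepsilon^2|v_0|^2}$, and a term from the $\alpha$-dependence of $f_0^2$ inside the integrand. The prefactor contribution yields the diagonal $(\partial v_0/\partial\alpha)$ piece weighted by $J_1(t,\alpha):=\int_0^t a^{-2}(s)[1+f_0^2 a^{-2}(s)]^{-3/2}\,ds$; using $\partial f_0^2/\partial\alpha^j = 2c^2\varepsilon^2 v_0^k\partial_{\alpha^j}v_0^k/(c^2-\varepsilon^2|v_0|^2)^2$ in differentiation under the integral produces the rank-one correction $(|v_0|^2 I-v_0^T v_0)(\partial v_0/\partial\alpha)$ weighted by $J_2(t,\alpha):=\int_0^t[a^{-2}(s)-a^{-4}(s)][1+f_0^2 a^{-2}(s)]^{-3/2}\,ds$. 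Rewriting $a^{-2}[1+f_0^2 a^{-2}]^{-3/2}=a/(a^2+f_0^2)^{3/2}$ and its $a^{-1}$ analogue puts the coefficients into the form displayed in the statement, so $\partial X/\partial\alpha=V(t,\alpha)$. By the inverse function theorem, $v$ remains a classical solution exactly as long as $\det V(t,\alpha)>0$ for every $\alpha$, so the life span is the first time $\det V$ hits zero (and infinite if it never does).

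The four regimes are then distinguished by the growth rate of $\int_0^t a^{-2}(s)\,ds$, which dominates both $J_1$ and $J_2$. Under $H_1$, $\ddot a\ge 0$ and $\dot a(0)>0$ force $a(t)\gtrsim 1+t$, making this integral uniformly bounded; under $H_2$ the hypothesis $a\ge(1+t)^{(1+\delta_0)/2}$ is imposed precisely so that it converges. Hence $V(t,\alpha)-I$ has operator norm of order $\varepsilon$ uniformly in $(t,\alpha)$, and for $\varepsilon_0$ small $\det V>0$ for all $t$, giving $T(\varepsilon)=\infty$. Under $H_3$ with $l<1/2$, $\int_0^t a^{-2}(s)\,ds\sim(1+t)^{1-2l}$, so $\det V$ first vanishes when $\varepsilon(1+T)^{1-2l}$ reaches a positive constant depending on $v_0$, giving $T(\varepsilon)\sim\varepsilon^{P/(2l-1)}-1$. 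Under $H_4$ the integral grows linearly, producing $T\sim\varepsilon^{-P}-1$; for $a(t)=(1+t)^{1/2}$ it is $\sim\log(1+t)$, which converts the power-type lifespan into the exponential $e^{P/\varepsilon}-1$. The one-dimensional part follows from the collapse of $V(t,\alpha)$ to the scalar $1+v_0'(\alpha)K(t,\alpha)$ with a strictly positive kernel $K$, so global existence is equivalent to $\inf_\alpha v_0'\ge 0$ and otherwise the first zero forces $v_x\to\infty$; the $n>1$ criterion in part (2) is immediate from $\partial X/\partial\alpha=V(t,\alpha)$.

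The main obstacle I anticipate is the sharp lifespan matching in the non-global cases. One must show that $\inf_{\alpha\in\mathbb R^n}\det V(t,\alpha)$ is actually attained at some worst-case $\alpha$ and decays with exactly the asymptotic rate in $\varepsilon$ predicted by \eqref{MR1}; this requires controlling the competition between the diagonal $J_1$-term and the rank-one $J_2$-correction and exploiting non-degeneracy of $\partial v_0/\partial\alpha$ at that point. The transitional case $l=1/2$ needs separate handling, since the logarithmic growth of $J_1$ must be tracked with enough precision to pin down the exponential constant in $e^{P/\varepsilon}$.
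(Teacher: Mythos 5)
Your proposal follows essentially the same route as the paper: solve the characteristic ODEs explicitly (the substitution $F=|V|/\sqrt{c^2-|V|^2}$ giving $F=f_0(\alpha)/a(t)$ and $v^i=c g_0^i/\sqrt{a^2(t)+f_0^2(\alpha)}$), identify $\partial x/\partial\alpha$ with the matrix $V(t,\alpha)$, use positivity of $\det V$ as the existence/blow-up criterion, and split the analysis according to the growth of $\int_0^t a^{-2}(s)\,\mathrm{d}s$ under $H_1$--$H_4$, with the scalar reduction for $n=1$. The only refinements the paper adds are to invoke Hadamard's global inverse function theorem (properness plus nonvanishing Jacobian, rather than the local inverse function theorem) and to obtain the global cases $H_1$, $H_2$ via strict diagonal dominance of $\partial x/\partial\alpha$ for small $\varepsilon$; the sharpness issue you flag is handled in the paper only to the same extent as in your sketch (a lower bound for general data and an upper bound of the same order exhibited along a characteristic with $v_0'(\alpha)<0$).
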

	\begin{Remark}\label{rem:1.2}
		Here we have some remarks concerning Theorem \ref{thm:1}
		\begin{itemize}
			\item When $ n=1 $, $ V(t,\alpha)=1+\frac{c^3\varepsilon v_0'(\alpha)}{[c^2-\varepsilon^2|v_0|^2]^{3/2}}\int_{0}^{t}\frac{a^{-2}(s)}{[1+f_0^2(\alpha)a^{-2}(s)]^{3/2}}\mathrm{d}s$. We can show clearly that $ \inf_{\alpha}v_0'(\alpha)\geqslant0$ is equivalent to $det(V(t,\alpha))>0$ for all $t>0$ in the proof of Theorem \ref{thm:1} of section \ref{sec:2} under the Assumption $H_3$.
			\item Compared to the results obtained in \cite{Huo-Wei}, we can get the global solution under the Assumption $H_2$ by the method of characteristics, while Huo and Wei \cite{Huo-Wei} only obtain the lower bound of the life span by energy methods.
			\item Under the Assumption $H_4$,  Theorem 3.1 of \cite{Li} give a sufficient and necessary condition for the existence of the unique global solution of the Cauchy problems \eqref{equation}, which states that none of the eigenvalues of the $n\times n$ matrix $\left(\frac{\partial v_0}{\partial \alpha}\right)$ are negative and this condition can ensure the positiveness of $\det\left(V(t,\alpha)\right)>0$ in Theorem \ref{thm:1} when $a(t)=1$.
			\item For the compressible Euler equations evolving in flat spacetime, Serre \cite{Serre} and Grassin-Serre \cite{Grassin-Serre} have proven the global existence of the classical solution when the spectrum of the Jacobian matrix of the initial velocity is non-negative, this assumption forces the particles to spread out.
		\end{itemize}
	\end{Remark}
	It is difficult to give a criterion similar to Theorem 3.1 of \cite{Li} on the initial data $v_0(\alpha)$ to guarantee $\det\left(V(t,\alpha)\right)>0$ for all $t>0$ under the Assumption $H_3$. Here we can give a sufficient condition, which can ensure the blowup of $|\nabla v|$.
	\begin{Theorem}\label{thm:s}
		Under the Assumption $H_3$, if $\displaystyle\inf_{\alpha}\det\left(\frac{\partial v_0}{\partial \alpha}\right)<0$, then $|\nabla v|$ must tend to infinity in finite time. \end{Theorem}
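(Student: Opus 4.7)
The plan is to apply the contrapositive of Theorem \ref{thm:1}$(2)$: if $\det V(t^{\star},\alpha^{\star})\le 0$ at some finite $t^{\star}>0$ and some $\alpha^{\star}\in \mathbb{R}^n$, then the classical solution cannot be extended past $t^{\star}$ and $|\nabla v|$ must blow up. By the hypothesis $\inf_{\alpha}\det(\partial v_0/\partial\alpha)<0$, fix $\alpha_0$ with $\det M<0$, where $M:=(\partial v_0/\partial\alpha)(\alpha_0)$. Pulling the scalar factors outside the matrices in the definition of $V$, we rewrite
\begin{equation*}
V(t,\alpha_0)=I+A(t,\alpha_0)M,\qquad A(t,\alpha_0):=J_1(t,\alpha_0)\,I-J_2(t,\alpha_0)\,K(\alpha_0),
\end{equation*}
where $K(\alpha_0):=|v_0(\alpha_0)|^2 I-v_0(\alpha_0)^{T}v_0(\alpha_0)$ is positive semidefinite with eigenvalues $0$ (in the direction of $v_0(\alpha_0)$) and $|v_0(\alpha_0)|^2$ (multiplicity $n-1$), and $J_1,J_2\ge 0$ denote the two scalar time integrals appearing in $V$.

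The heart of the proof is to show that $A(t,\alpha_0)$ is positive definite with every eigenvalue diverging to $+\infty$ as $t\to\infty$. Under Assumption $H_3$, $a(s)\sim(1+s)^l$ with $0<l\le 1/2$, and since $f_0^2(\alpha_0)\,a^{-2}(s)$ stays bounded on $[0,\infty)$, the integrand in $J_1$ is bounded below by a positive constant times $(1+s)^{-2l}$, whose integral diverges; thus $J_1(t,\alpha_0)\to+\infty$. The comparison $a^{-2}-a^{-4}\le a^{-2}$ (valid because $a(0)=1$ and $a$ is increasing) forces $J_2/J_1\le \varepsilon^2/c^2$. After shrinking $\varepsilon_0$ if necessary so that $\varepsilon^2|v_0(\alpha_0)|^2/c^2<1$, the two distinct eigenvalues of $A(t,\alpha_0)$, namely $J_1$ and $J_1-J_2|v_0(\alpha_0)|^2$, are strictly positive and both diverge.

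With $A$ invertible I can write $V=A(A^{-1}+M)$, giving $\det V(t,\alpha_0)=\det A(t,\alpha_0)\cdot\det\bigl(A^{-1}(t,\alpha_0)+M\bigr)$. The first factor is positive, while $A^{-1}\to 0$ forces $\det(A^{-1}+M)\to\det M<0$; hence $\det V(t,\alpha_0)<0$ for all sufficiently large finite $t$. Since $\det V(0,\alpha_0)=1>0$ and $t\mapsto \det V(t,\alpha_0)$ is continuous, the intermediate value theorem provides a finite $t^{\star}>0$ with $\det V(t^{\star},\alpha_0)=0$, and Theorem \ref{thm:1}$(2)$ then yields the blowup of $|\nabla v|$ no later than $t^{\star}$. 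The main subtle point is the positivity of $A$: \emph{a priori} the correction $-J_2 K$ could drag an eigenvalue of $A$ below zero, and ruling this out is exactly what the uniform bound $J_2/J_1\le \varepsilon^2/c^2$, together with the smallness of $\varepsilon$, is designed to achieve.
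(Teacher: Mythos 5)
Your proposal is correct, and it reaches the conclusion by the same overall scaffolding as the paper (work along the characteristic through a point $\alpha_0$ where $\det(\partial v_0/\partial\alpha)<0$, show $\det\bigl(\tfrac{\partial x}{\partial\alpha}\bigr)=\det V$ becomes negative for large $t$, invoke continuity and the intermediate value theorem to get a finite vanishing time, then apply the blowup criterion established in the proof of Theorem \ref{thm:1}). The heart of the argument, however, is handled by a genuinely different device. The paper expands $\det\bigl(I+\partial h/\partial\alpha\bigr)$ as a polynomial $\sum_{k+s\le n}G^{ks}(\alpha)F_2^kF_4^s$ in the two time integrals, uses $F_4/F_2\to 0$ and $F_2\to\infty$ under $H_3$ to isolate the leading coefficient $G^{n0}$, and then must shrink $\varepsilon$ so that $G^{n0}$ inherits the sign of $\det(\partial v_0/\partial\alpha)$, since $G^{n0}$ is the determinant of the $\varepsilon$-perturbed matrix $c^2\tfrac{\partial v_0^i}{\partial\alpha^j}-\varepsilon^2K^{ij}$. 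You instead factor $V=I+AM=A(A^{-1}+M)$ with $A=J_1I-J_2K$, prove $A$ is symmetric positive definite with both eigenvalues $\ge(1-\varepsilon^2|v_0(\alpha_0)|^2/c^2)J_1\to+\infty$ (using $J_2/J_1\le\varepsilon^2/c^2$, which comes from $a\ge1$ together with the ratio $c\varepsilon^3/(c^3\varepsilon)$ of the two scalar prefactors), and conclude $\det V=\det A\cdot\det(A^{-1}+M)\to -\infty$-type negativity directly from $\det(A^{-1}+M)\to\det M<0$. This buys two things: it avoids the combinatorial expansion of the determinant and the limiting argument on its leading coefficient, and it needs no additional smallness of $\varepsilon$ beyond the standing constraint $\varepsilon^2|v_0|^2<c^2$ already required for the solution formulas (so your ``shrinking $\varepsilon_0$'' is in fact automatic), whereas the paper's proof genuinely uses an extra threshold $\varepsilon_3$. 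The only blemish is notational: you introduce $J_1,J_2$ as ``the two scalar time integrals appearing in $V$'' but the bound $J_2/J_1\le\varepsilon^2/c^2$ (and the eigenvalue formulae for $A$) only hold once the prefactors $\frac{c^3\varepsilon}{[c^2-\varepsilon^2|v_0|^2]^{3/2}}$ and $\frac{c\varepsilon^3}{[c^2-\varepsilon^2|v_0|^2]^{3/2}}$ are absorbed into $J_1,J_2$, as your phrase ``pulling the scalar factors outside the matrices'' intends; stating this explicitly would make the argument airtight.
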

	\begin{Remark}\label{rem:1.4}
		In Theorem \ref{thm:s},  $\displaystyle\inf_{\alpha}\det\left(\frac{\partial v_0}{\partial \alpha}\right)<0$ is only a sufficient condition. When $\displaystyle\inf_{\alpha}\det\left(\frac{\partial v_0}{\partial \alpha}\right)\geq0$, we can also find different initial data such that the classical solution exists globally or blows up in finite time.
	\end{Remark}
	
	We next consider the whole system \eqref{ME1}, in the existence domain of the classical solution, the system \eqref{ME1} can be equivalently rewritten as
	\begin{equation}\label{ME2}
		\begin{cases}
			\frac{\partial\rho}{\partial t}+\sum_{i=1}^{n}\left(\frac{v^i}{a(t)}\cdot\frac{\partial\rho}{\partial x^i}\right)+\rho\left\{\frac{\dot{a}(t)}{a(t)}\left(\frac{u^2}{c^2}+n\right)
			+\frac{u_t}{c^2-u}+\sum_{i=1}^{n}\left(\frac{\partial v^i}{\partial x^i}\frac{1}{a(t)}+\frac{\partial u}{\partial x^i}\frac{v^i}{a(t)(c^2-u)}\right)\right\}=0,\\
			\frac{\partial v^i}{\partial t}+\sum_{j=1}^{n}\frac{v^j}{a(t)}\frac{\partial v^i}{\partial x^j}+\frac{\dot{a}(t)}{a(t)}v^i-\frac{\dot{a}(t)}{a(t)}\frac{|v|^2}{c^2}v^i=0,\\
			t=0 : \rho(0,x^{1}, x^{2},\cdots, x^{n}) = \varepsilon\rho_{0}(x^{1}, x^{2},\cdots, x^{n}), \\
			t=0 : v(0,x^{1},x^{2},\cdots, x^{n}) = \varepsilon v_0(x^{1},x^{2},\cdots, x^{n}),
		\end{cases}
	\end{equation}
	where $u=|v|^2=\sum_{i=1}^{n}(v^i)^2$.
	\begin{Remark}
		From the first equation of \eqref{ME2}, we see that the density $\rho$ depends on the first order derivatives of the velocity $v$, thus, $\rho$ itself loses one degree of the differentiability, that is the reason that we assume that $ v_0 \in C^{2}(\mathbb{R}^{n}) $.
	\end{Remark}
	Next result is concerned with \eqref{ME2}， which is also a continuation of Theorem \ref{thm:1}.
	\begin{Theorem}\label{thm:2}
		Under the Assumptions $H_1$ and $H_2$, there exists a positive constant $\varepsilon_0$, such that the
		Cauchy problem \eqref{ME2} admits a global classical solution $(\rho, v)\in C^{1}([0,+\infty)\times \mathbb R^n)$ when $\varepsilon\in[0,\varepsilon_0]$. We can further demonstrate that $(\rho,\nabla \rho)$ are bounded for all $t\in[0,+\infty)$, and as $t$ tends to $+\infty$, they decay to zero.
	\end{Theorem}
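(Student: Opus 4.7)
My plan is to use the global velocity field $v$ produced by Theorem \ref{thm:1} under Assumption $H_1$ or $H_2$ as input, and to treat the first equation of \eqref{ME2} as a linear transport equation for $\rho$ whose coefficients are explicitly controlled in terms of $v$. Theorem \ref{thm:1} supplies a global $C^2$ velocity with $\|v\|_{L^\infty}+\|\nabla v\|_{L^\infty}=O(\varepsilon)$ and, crucially, that the matrix $V(t,\alpha)$, which up to a transpose coincides with the Jacobian $\partial X/\partial\alpha$ of the characteristic flow, has determinant uniformly bounded above and below on $[0,\infty)\times\mathbb R^n$ because $\int_0^\infty a^{-2}(s)\,ds<\infty$ under either assumption. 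A preliminary step is to differentiate \eqref{equation} once more in $x$ and rerun the characteristic argument that produced $V$, which gives $\|\nabla^2 v\|_{L^\infty}=O(\varepsilon)$ globally; this is where the hypothesis $v_0\in C^2$ enters.

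Writing the first equation of \eqref{ME2} as $(\partial_t+\tfrac{v^i}{a}\partial_i)\rho+\Phi\,\rho=0$, the key algebraic observation is that the two terms involving $u=|v|^2$ combine into a material derivative,
\begin{equation*}
\frac{u_t}{c^2-u}+\frac{v^i\partial_i u}{a(c^2-u)}=\frac{(\partial_t+\tfrac{v^i}{a}\partial_i)u}{c^2-u}=-\frac{d}{dt}\ln\bigl(c^2-u(t,X(t;\alpha))\bigr),
\end{equation*}
so that integrating the $\rho$ equation along the flow $\dot X^i=v^i(t,X)/a(t)$, $X(0;\alpha)=\alpha$, yields
\begin{equation*}
\rho\bigl(t,X(t;\alpha)\bigr)=\varepsilon\rho_0(\alpha)\,a^{-n}(t)\,\frac{c^2-\varepsilon^2|v_0(\alpha)|^2}{c^2-u(t,X(t;\alpha))}\exp\!\Bigl(-\int_0^t\Bigl[\frac{\dot a\,u^2}{a c^2}+\frac{\partial_i v^i}{a}\Bigr]_{X(s;\alpha)}ds\Bigr).
\end{equation*}
The factor $(c^2-u_0)/(c^2-u)=1+O(\varepsilon^2)$ is bounded, the first piece of the exponent is at most $C\varepsilon^4\ln a(t)$ since $u^2=O(\varepsilon^4)$, and the genuinely dangerous term $\int_0^t a^{-1}\partial_i v^i\,ds$ is recognized via Liouville's formula as $\ln\det(\partial X/\partial\alpha)(t;\alpha)=\ln\det V(t,\alpha)$, uniformly bounded by Theorem \ref{thm:1}. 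Consequently $|\rho(t,x)|\le C\varepsilon\,a^{-n+C\varepsilon^4}(t)\to 0$ as $t\to\infty$.

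For the gradient, the same characteristic formula exhibits $\rho$ as an explicit function of $(t,\alpha)$ which is $C^1$ in $\alpha$ under $v_0\in C^2$. Differentiating in $\alpha$ and using the already-established bounds on $\partial_\alpha V$, $V^{-1}$ and $\partial_\alpha u$, one obtains $|\nabla_\alpha\rho(t,X(t;\alpha))|\le C\varepsilon\,a^{-n+o(1)}(t)$. Converting back via $\nabla_x\rho=(\partial X/\partial\alpha)^{-T}\nabla_\alpha\rho=V^{-T}\nabla_\alpha\rho$ and using the uniform bound on $V^{-1}$ yields $|\nabla\rho(t,x)|\le C\varepsilon\,a^{-n+o(1)}(t)\to 0$ as $t\to\infty$.

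The main obstacle is the term $\int_0^t a^{-1}\partial_i v^i\,ds$: under $H_2$ one only has $\int_0^\infty a^{-1}\,ds=\infty$, so a naive pointwise bound $|a^{-1}\partial_i v^i|\le C\varepsilon a^{-1}$ is not enough to conclude. Identifying this integral with $\ln\det V$ via Liouville's formula, and invoking the uniform two-sided bounds on $V$ already produced in Theorem \ref{thm:1}, is the essential rescue. The algebraic cancellation grouping $u_t$ with $v^i\partial_i u$ into a pure material derivative is the other essential ingredient, since otherwise the $u_t$ contribution would accumulate in a way not controlled by any power of $a(t)$.
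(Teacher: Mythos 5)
Your proposal is correct in substance and follows the same overall strategy as the paper: integrate the density equation along the characteristics of the velocity field, obtain an explicit exponential formula for $\rho$, and control everything through the Jacobian of the flow map supplied by Theorem \ref{thm:1}. The one genuine difference is your treatment of $\int_0^t a^{-1}\partial_i v^i\,\mathrm{d}s$. The paper needs no Liouville-type identity: from the explicit characteristic solution \eqref{3.6} one has $\partial v^i/\partial\alpha^l=O(a^{-1}(t))$, and since $\partial\alpha/\partial x$ is uniformly bounded (bounded entries of $\partial x/\partial\alpha$ together with a determinant bounded away from zero by diagonal dominance, $|\partial h^i/\partial\alpha^j|\le 1/(2n)$), one gets $a^{-1}\partial_i v^i=O(a^{-2})$, which is integrable under both $H_1$ and $H_2$; so the ``naive'' pointwise bound does suffice once the time decay of $\nabla_x v$ is used, and the obstacle you describe is not actually present. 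Your alternative, identifying $\int_0^t a^{-1}\partial_i v^i\,\mathrm{d}s=\ln\det(\partial x/\partial\alpha)$ and invoking two-sided bounds on $\det V$, is valid and arguably slicker, but note that the uniform lower bound on $\det V$ is a byproduct of the estimates in the proof of Theorem \ref{thm:1} (diagonal dominance), not of its statement, so it should be cited as such; likewise the bounds on $\partial_\alpha V$ needed for $\nabla\rho$ require the second-derivative estimates on $\partial^2 x^i/\partial\alpha^j\partial\alpha^k$ and Lemma \ref{Lem:2.4}, which the paper establishes inside this proof rather than in Theorem \ref{thm:1}; your sketch of these steps is plausible but not carried out. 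Two harmless slips: the multiplicative factor should be $(c^2-u(t))/(c^2-u_0)$ rather than its reciprocal, and the $u^2/c^2$ in your exponent is inherited from a typo in \eqref{ME2} (the correct coefficient, as in \eqref{3.3}, is $u/c^2$); in either case the factor is $1+O(\varepsilon^2)$ and the corresponding integral is uniformly bounded since $u=O(\varepsilon^2 a^{-2})$ and $\int_0^\infty \dot a\,a^{-3}\,\mathrm{d}t\le\tfrac12$, so your conclusions $|\rho|\lesssim\varepsilon a^{-n}(t)$ and $\rho,\nabla\rho\to0$ stand.
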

	\begin{Remark}
		In \cite{Speck}, Speck has obtained the global existence of the classical solution to the relativistic dust under $\int_{0}^{\infty}a^{-2}(t)dt<+\infty$ by the energy method. The difference between this work and \cite{Speck} lies in that we use the method of the characteristics instead of energy estimates. Besides, this work gives a clear classification on the scale factor to the global existence and the blowup of the classical solution to the relativistic dust.
	\end{Remark}
In order to study the blowup phenomenon of the density when the velocity blows up, we study the spherically symmetric solution $\left(\rho(t,r),\textbf{v}(t,r)\frac{x^i}{|x|}\right)$ with $r=|x|=\sqrt{\sum_{i=1}^{n}(x^i)^2}$ of \eqref{ME1}.
\begin{Theorem}\label{thm:3}
	Under the Assumption $H_3$ or $H_4$, and spherically symmetric initial data $\left(\rho_{0}(r), \textbf{v}_0(r)\frac{x^i}{r}\right)$, we have $(\rho,\textbf{v})\in C^1([0,+\infty)\times \mathbb R^{+})$ when $\textbf{v}_0^{'}(\alpha)\geq 0$ for all $\alpha$. On the other hand, if there exists an $\alpha$ such that $\textbf{v}_0^{'}(\alpha)<0$, then the density $\rho$ and the gradient of the velocity $\textbf{v}$ will blow up simultaneously.
\end{Theorem}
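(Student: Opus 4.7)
The plan is to exploit spherical symmetry to reduce \eqref{ME1} to a $(1+1)$-dimensional problem in $(t,r)$ for $(\rho(t,r),\textbf{v}(t,r))$, and then to invoke Theorem~\ref{thm:1} for the velocity component together with a characteristic analysis for the density. Substituting $v^i=\textbf{v}(t,r)\frac{x^i}{r}$ and using the identities $\sum_i \partial_{x^i}v^i=\textbf{v}_r+\frac{(n-1)\textbf{v}}{r}$, $\sum_i v^i\partial_{x^i}|v|^2=2\textbf{v}^2\textbf{v}_r$ and $\partial_t|v|^2=2\textbf{v}\textbf{v}_t$, the second equation of \eqref{ME2} reduces exactly to
$$\textbf{v}_t+\frac{\textbf{v}}{a(t)}\textbf{v}_r+\frac{\dot a(t)}{a(t)}\textbf{v}\Big(1-\frac{\textbf{v}^2}{c^2}\Big)=0,$$
which is the $n=1$ case of \eqref{equation}. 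Under Assumption $H_3$ or $H_4$, Theorem~\ref{thm:1}(1) then gives $\textbf{v}\in C^1([0,+\infty)\times\mathbb{R}^+)$ when $\textbf{v}_0'(\alpha)\geq 0$, and blowup of $\textbf{v}_r$ at the sharp time $T(\varepsilon)$ defined in \eqref{MR1} along the characteristic issuing from any $\alpha_\ast$ with $\textbf{v}_0'(\alpha_\ast)<0$.

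Next I would analyse the density along the velocity characteristics $r(t;\alpha)$ satisfying $\frac{dr}{dt}=\frac{\textbf{v}}{a(t)}$, $r(0)=\alpha$. After substitution, the first equation of \eqref{ME2} becomes a linear ODE for $\rho$ along $r(t;\alpha)$,
$$\frac{d\rho}{dt}+\rho\,F\bigl(t,\textbf{v},\textbf{v}_r,\textbf{v}_t\bigr)=0,\qquad \rho(0)=\varepsilon\rho_0(\alpha),$$
where $F$ is linear in $\textbf{v}_r$ with coefficient $\frac{1}{a(t)}\bigl(1+\frac{2\textbf{v}^2}{c^2-\textbf{v}^2}\bigr)$, the remaining contributions being smooth and bounded in $\textbf{v}$. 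This yields the explicit representation
$$\rho\bigl(t,r(t;\alpha)\bigr)=\varepsilon\rho_0(\alpha)\exp\!\Big(-\!\int_0^t F(s,\textbf{v},\textbf{v}_r,\textbf{v}_t)\,ds\Big),$$
from which both the global bound and the blowup criterion will be read off.

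In the global case, the Jacobian analysis produced in the proof of Theorem~\ref{thm:1} for $n=1$ gives a uniform upper and lower bound on $\textbf{v}_r$ when $\textbf{v}_0'\geq 0$, and $\textbf{v}_t$ is then bounded via the equation itself. Combined with $\|\textbf{v}\|_{L^\infty}\leq C\varepsilon$, the integrand $F$ is uniformly integrable in $t$ along every characteristic, so $\rho$ remains bounded (and, by standard decay of $a^{-1}(t)$-weighted quantities in the other cases, decays to zero as in Theorem~\ref{thm:2}). Differentiating the explicit formula in $\alpha$ and bounding $\partial_\alpha\textbf{v}_r$ from the $C^2$ norm of $v_0$ yields $\rho\in C^1$. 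In the blowup case, along the critical characteristic starting at $\alpha_\ast$, the explicit expression for $\textbf{v}_r$ from Section~\ref{sec:2} shows $\textbf{v}_r\to-\infty$ at the rate dictated by $V(t,\alpha_\ast)\to 0$, so the dominant term $\frac{\textbf{v}_r}{a(t)}$ in $F$ forces $\int_0^t F\,ds\to-\infty$; therefore $\rho(t,r(t;\alpha_\ast))\to+\infty$ simultaneously with $|\nabla\textbf{v}|\to+\infty$.

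The main obstacle I expect is twofold. First, one must check that the singular contribution of $\textbf{v}_r$ to $F$ is not cancelled by the bounded lower-order terms; this requires a quantitative rate estimate $\textbf{v}_r(t,r(t;\alpha_\ast))\sim -\frac{1}{T(\varepsilon)-t}$ (or the analogous rate under Assumption $H_3$), obtained from the explicit Jacobian formula, so that $\int^{T(\varepsilon)} \frac{\textbf{v}_r}{a(s)}\,ds$ diverges. Second, the $(n-1)/r$ term inherited from the divergence in spherical coordinates is singular at $r=0$; the standard remedy, which I would adopt, is to assume that $\textbf{v}_0$ vanishes to sufficient order at the origin so that the characteristics stay away from $r=0$ on the time interval of interest, thereby keeping the analysis purely $1+1$-dimensional.
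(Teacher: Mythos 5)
Your proposal is correct and follows essentially the same route as the paper: reduce to a $(1+1)$-dimensional problem by spherical symmetry, settle the velocity by the $n=1$ case of Theorem~\ref{thm:1}, write $\rho$ along characteristics as $\varepsilon\rho_0(\alpha)$ times the exponential of a time integral, and conclude boundedness and $C^1$ regularity when $\textbf{v}_0'\geq 0$ and simultaneous blowup otherwise. Two points of comparison. First, the quantitative rate you flag as the main obstacle is exactly what the paper supplies: from the explicit formulas $\partial_\alpha\textbf{v}(t,\alpha)=c f_0'(\alpha)a^{-1}(t)\left[1+f_0^2(\alpha)a^{-2}(t)\right]^{-3/2}$ and $\partial_\alpha r(t,\alpha)$, an application of L'Hopital's rule gives $\partial_r\textbf{v}(t,\alpha)\sim -a(t_2)/(t_2-t)$ as $t\to t_2^-$, and since $t_2<\infty$ all other terms in the exponent (the $\dot a/a$ contributions and the factor $|\textbf{v}|\leq C\varepsilon$) stay bounded on $[0,t_2]$, so no cancellation can occur and $\rho\to+\infty$ together with $\partial_r\textbf{v}\to-\infty$; your sketch points to the right source, it just needs to be carried out. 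Second, your reduced continuity equation retains the geometric term $(n-1)\textbf{v}/(a(t)r)$ coming from the spatial divergence, and you propose an extra vanishing hypothesis on $\textbf{v}_0$ at the origin that is not in the theorem; the paper's spherically symmetric reduction of \eqref{ME1} and its resulting equation for $\ln\rho$ contain no such term, so its exponent involves only $\dot a/a$ and $\partial_r\textbf{v}/a$. If you keep that term, note that continuity of the vector field $\textbf{v}_0(r)x^i/r$ at the origin already forces $\textbf{v}_0(0)=0$, so bounding $\textbf{v}/r$ essentially reduces to bounding $\partial_r\textbf{v}$ and the additional hypothesis should not be needed; you would, however, still have to verify that the relevant characteristics do not approach $r=0$ before $t_2$, an issue that does not arise in the paper's formulation.
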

\begin{Remark}
	Based on Theorem \ref{thm:1}, under the Assumption $H_3$ or $H_4$ and $\displaystyle\inf_{\alpha}\det\left(\frac{\partial v_0}{\partial \alpha}\right)<0$, we can make sure that $\nabla v$ tends to $-\infty$, but it is difficult to tell whether the density blows up or not. While under the spherically symmetric initial data, we can obtain that $\rho$ and $|\nabla v|$ will blow up at the same time, which means that the $C^0$ norm instead of the $C^1$ one of system \eqref{ME2} will blow up in finite time.
\end{Remark}
\begin{Remark}
	Under the Assumption $H_4$ and $n=1$, the third author in \cite{Wei1} has shown that the
	Cauchy problem \eqref{ME1} admits a global classical solution $(\rho, v)\in C^{1}([0,+\infty)\times \mathbb R)$ if and only if $v_0^{\prime}(x)\geq 0$ for arbitrary $x$. If there exists $x_0$ such that $v_0^{\prime}(x_0)< 0$, then the density must blow up in finite time. This phenomenon is consistent with a famous Majda's conjecture \cite{Majda}, which states that the system of hyperbolic conservation laws with totally linearly degenerate characteristics with initial data belong to $H^s(\mathbb R^n)$ with $s\geq 1+\frac{n}{2}$ admits a global smooth solution, unless there exists a $T^*$, such that the solution itself escapes every compact subset when $t$ tends to $T^*$. By Theorem \ref{thm:3}, we find that in the curved spacetime, the density can also blow up in finite time.
\end{Remark}
A good property of the pressureless dust is that the momentum equations do not depend on the density $\rho$, and the density and the velocity have the same propagation speeds. That is the main reason for us to deal with this system in several space variables by the characteristic method.

We arrange this paper as follows, in Section 2, we give some preparations and we study the property of the velocity and prove Theorems \ref{thm:1} and \ref{thm:s} in Section \ref{sec:2}. In Section \ref{sec:3}, we study the property of the density based on the results of Theorem \ref{thm:1}. The spherically symmetric solution is studied in Section \ref{sec:4}.

\section{Preliminaries}
We first give several lemmas, which are important for the proof of the main results.

The first lemma is the well-known Hadamard's lemma, which can
be found in \cite{Kong, Li}.
\begin{Lemma}\label{rem:2.1}
	A $C^{2}$ \textit{mapping} $x:\mathbb{R}^n\to \mathbb{R}^n,x=\varphi(\alpha)$ is a global $C^{2}$
	diffeomorphism if and only if
	\begin{itemize}
		\item $\varphi$ is a proper mapping, namely,
		$$|\varphi(\alpha)|\to\infty \quad as \quad|\alpha|\to\infty .$$
		\item $\varphi$ is a local $C^1$ diffeomorphism, i.e.,
		$$\det\left(\frac{\partial\varphi^i(\alpha)}{\partial\alpha^j}\right)\neq0,\quad\forall \alpha\in\mathbb{R}^n.$$
	\end{itemize}
\end{Lemma}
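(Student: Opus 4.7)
The plan is to prove the two directions of the equivalence separately, with the reverse direction carrying all the content. For necessity, if $\varphi$ is a global $C^2$ diffeomorphism with inverse $\psi$, then differentiating $\psi \circ \varphi = \mathrm{id}$ yields $D\psi(\varphi(\alpha))\,D\varphi(\alpha) = I$, so $\det D\varphi(\alpha) \neq 0$ everywhere; and $\psi$ being continuous maps bounded sets to bounded sets, so $|\alpha| = |\psi(\varphi(\alpha))|$ must blow up whenever $|\varphi(\alpha)|$ does, which is exactly properness.

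For sufficiency, my plan is to show $\varphi$ is a covering map of $\mathbb{R}^n$ onto $\mathbb{R}^n$ and then exploit simple connectedness to conclude bijectivity. I would first establish surjectivity by showing $\varphi(\mathbb{R}^n)$ is both open and closed: openness follows from the local diffeomorphism hypothesis (open mapping), and closedness from properness — if $y_k = \varphi(\alpha_k) \to y$, the sequence $\{y_k\}$ is bounded, forcing $\{\alpha_k\}$ to remain bounded by the contrapositive of properness, and a convergent subsequence then produces a preimage of $y$. Next, for each fixed $y$ the fiber $\varphi^{-1}(y)$ is discrete by local invertibility and compact (hence finite) by properness; choosing disjoint local diffeomorphism neighborhoods around each preimage, together with a small enough ball around $y$ controlled by properness, gives an evenly covered neighborhood, making $\varphi$ a covering map.

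The main obstacle is injectivity. My plan is a monodromy argument: given $\alpha_1 \neq \alpha_2$ with $\varphi(\alpha_1) = \varphi(\alpha_2) = y_0$, pick any path in $\mathbb{R}^n$ from $\alpha_1$ to $\alpha_2$; its image is a loop at $y_0$, and simple connectedness of $\mathbb{R}^n$ contracts this loop to a point. Lifting the contracting homotopy through the covering map $\varphi$ produces a continuous family of path-endpoints lying entirely in the discrete fiber $\varphi^{-1}(y_0)$, which must therefore be constant, forcing $\alpha_2 = \alpha_1$. An alternative route I would keep in mind is more analytic and avoids covering-space language: for any smooth curve $\gamma(s)$ in the target starting at $y_0 = \varphi(\alpha_0)$, solve the ODE $\dot{\beta}(s) = (D\varphi(\beta(s)))^{-1}\dot{\gamma}(s)$ with $\beta(0) = \alpha_0$; properness prevents $|\beta(s)|$ from escaping in finite $s$, so the lift is globally defined and yields a direct construction of $\varphi^{-1}$, from which the $C^2$ regularity of the inverse follows by the standard inverse function theorem applied locally.
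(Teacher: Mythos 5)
Your proposal is correct, but note that the paper does not actually prove this lemma: it is quoted as the well-known Hadamard lemma with a citation to the books of Kong and of Li, so your covering-space argument supplies a proof that the paper omits. What you write is the standard proof of the Hadamard--Caccioppoli global inverse function theorem: necessity by differentiating $\psi\circ\varphi=\mathrm{id}$ and using that the continuous inverse $\psi$ maps bounded sets to bounded sets; sufficiency by showing $\varphi(\mathbb{R}^n)$ is open (local diffeomorphism) and closed (properness), hence all of $\mathbb{R}^n$, that each fiber is finite and $\varphi$ is a covering map, and that simple connectedness of the base forces the covering to be one-sheeted, i.e.\ $\varphi$ is injective; the local inverse function theorem then gives $C^2$ regularity of the global inverse. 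Two small points to tighten. First, in the necessity direction you state that $|\alpha|$ must blow up whenever $|\varphi(\alpha)|$ does, which is the reverse implication; the correct deduction from ``$\psi$ maps bounded sets to bounded sets'' is that if $|\varphi(\alpha_k)|$ stays bounded then $|\alpha_k|=|\psi(\varphi(\alpha_k))|$ stays bounded, i.e.\ $|\alpha_k|\to\infty$ forces $|\varphi(\alpha_k)|\to\infty$, which is the properness asserted in the lemma. Second, in building the evenly covered neighborhood you should make explicit why the preimage of a sufficiently small ball around $y$ is contained in the union of the chosen disjoint neighborhoods of the finitely many preimages of $y$: if not, there would be $\alpha_k$ outside that union with $\varphi(\alpha_k)\to y$; properness bounds $\{\alpha_k\}$, and a convergent subsequence would produce a preimage of $y$ outside the open union, a contradiction. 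With these details filled in, the argument is complete, and your alternative ODE-lifting route is also viable (properness again being what keeps the lifted curve from escaping to infinity), though it too ultimately needs a uniqueness-of-lifts argument to yield injectivity.
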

Denote the $n\times n$ matrix $A$ as
\begin{equation*}
	A:=(a_{ij})=\left(
	\begin{array}{cccc}
		a_{11}&a_{12},&\cdots &a_{1n}\\
		a_{21}&a_{22},&\cdots & a_{2n}\\
		\cdots&\cdots&\cdots&\cdots\\
		a_{n1}&a_{n2},&\cdots & a_{nn}\\
	\end{array}\right).
\end{equation*}

The following lemmas are fundamental results in linear algebra, demonstrating that an $n\times n$ strictly diagonally dominant matrix is invertible.

\begin{Lemma}\label{Lem:2.2} If $A$ is a $n\times n$ matrix and satisfies 	$
	|a_{ii}|>\sum_{j\neq i}|a_{ij}|$, $1\leqslant i\leqslant n
	$, then we can get $\det{A}\neq0$.
\end{Lemma}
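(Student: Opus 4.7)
The statement is the classical Levy--Desplanques theorem on strict diagonal dominance, and my plan is to prove it by contradiction via a maximum-component argument on a putative null vector.

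First I would assume, for contradiction, that $\det A=0$. Then the homogeneous system $Ax=0$ admits a nontrivial solution $x=(x_1,\dots,x_n)^T\neq 0$. Choose an index $i_0$ at which the absolute value of the components is maximized, namely
\begin{equation*}
|x_{i_0}|=\max_{1\leq j\leq n}|x_j|>0.
\end{equation*}

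Next I would read off the $i_0$-th equation of $Ax=0$, namely $\sum_{j=1}^n a_{i_0 j}x_j=0$, and isolate the diagonal term to obtain
\begin{equation*}
a_{i_0 i_0}x_{i_0}=-\sum_{j\neq i_0}a_{i_0 j}x_j.
\end{equation*}
Taking absolute values, applying the triangle inequality, and using $|x_j|\leq |x_{i_0}|$ for all $j$, I get
\begin{equation*}
|a_{i_0 i_0}||x_{i_0}|\leq \sum_{j\neq i_0}|a_{i_0 j}||x_j|\leq |x_{i_0}|\sum_{j\neq i_0}|a_{i_0 j}|.
\end{equation*}
Dividing by $|x_{i_0}|>0$ yields $|a_{i_0 i_0}|\leq \sum_{j\neq i_0}|a_{i_0 j}|$, which contradicts the strict diagonal dominance hypothesis for $i=i_0$. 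Hence $\det A\neq 0$.

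There is essentially no obstacle here; the only subtlety is the strict inequality in the hypothesis, which is precisely what rules out the boundary case $|a_{i_0 i_0}|=\sum_{j\neq i_0}|a_{i_0 j}|$ that could otherwise permit a nontrivial kernel. Because the lemma will be applied later to Jacobian-type matrices arising from the characteristic transformation, it is worth noting in passing that the argument is purely algebraic and requires no assumption on the signs or the real/complex nature of the entries $a_{ij}$.
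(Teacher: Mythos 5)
Your proof is correct and is essentially the same maximum-component argument as in the paper: the paper shows directly that for any nonzero $x$ the entry of $Ax$ at the index of largest $|x_j|$ is nonzero, while you phrase the identical estimate as a contradiction with a null vector. No substantive difference.
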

\begin{proof}
	For arbitrary $x\in\mathbb{R}^n$\,($x\neq0$), there exists a $x_j\neq0$ such that $|x_j|=\sup|x_i|$.
	Set $y=Ax$ where $y_j=\sum_{i=1}^{n}a_{ji}x_i$, then we get
	\begin{equation*}
		|y_j|\geqslant|x_j||a_{jj}|-\sum_{i\neq j}|x_i||a_{ji}|\geqslant|x_j|\left(|a_{jj}|-\sum_{i\neq j}|a_{ji}|\right)>0.
	\end{equation*}
	Then  $Ax=0$ has only zero solution, we get  $\det{A}\neq0$.
	
\end{proof}
\begin{Corollary}\label{cor:2.1}
	If $A$ is a $n\times n$ matrix and satisfies 	$
	a_{ii}>\sum_{j\neq i}|a_{ij}|$, $1\leqslant i\leqslant n
	$, then we can get $\det{A}>0$.
\end{Corollary}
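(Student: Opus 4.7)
The plan is to deduce the corollary from Lemma \ref{Lem:2.2} by a continuity argument that connects $A$ to the identity matrix through a one-parameter family of matrices, all of which remain strictly diagonally dominant.

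First I would introduce the homotopy $A(t) = (1-t)I + tA$ for $t\in[0,1]$, so that $A(0)=I$ and $A(1)=A$. The entries of $A(t)$ are $(1-t)+ta_{ii}$ on the diagonal and $ta_{ij}$ off the diagonal. Since the hypothesis gives $a_{ii} > \sum_{j\neq i}|a_{ij}| \geq 0$, in particular $a_{ii} > 0$, so the diagonal entries $(1-t)+ta_{ii}$ are strictly positive for every $t\in[0,1]$. A one-line estimate then shows
\begin{equation*}
(1-t)+ta_{ii} \;\geq\; (1-t) + t\sum_{j\neq i}|a_{ij}| \;>\; t\sum_{j\neq i}|a_{ij}| \;=\; \sum_{j\neq i}|t\,a_{ij}|,
\end{equation*}
where the strict inequality at the end uses $(1-t)>0$ when $t<1$ and uses $a_{ii}>\sum_{j\neq i}|a_{ij}|$ when $t=1$.

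Thus every $A(t)$ satisfies the strict diagonal dominance hypothesis of Lemma \ref{Lem:2.2}, so $\det A(t) \neq 0$ for every $t\in[0,1]$. Since $t\mapsto \det A(t)$ is a polynomial in $t$, it is continuous on $[0,1]$, and it does not vanish there. Because $\det A(0) = \det I = 1 > 0$, the intermediate value theorem forces $\det A(t) > 0$ throughout $[0,1]$; in particular $\det A = \det A(1) > 0$, which is the claimed conclusion.

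The only step requiring care is verifying that strict diagonal dominance is preserved along the homotopy; once that is established, the argument is a routine application of Lemma \ref{Lem:2.2} combined with continuity of the determinant. No new ideas beyond the already-proven lemma are needed.
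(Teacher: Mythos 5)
Your homotopy argument is correct: every $A(t)=(1-t)I+tA$ has strictly positive diagonal entries and remains strictly diagonally dominant on $[0,1]$, so Lemma \ref{Lem:2.2}, the continuity of $t\mapsto\det A(t)$, and $\det A(0)=\det I=1>0$ indeed force $\det A=\det A(1)>0$. The paper states the corollary without an explicit proof, and your continuity argument is exactly the standard route by which it follows from Lemma \ref{Lem:2.2}, so there is nothing missing.
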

\begin{Lemma}\label{Lem:2.3} If $\det A\neq 0$, then the $A$ has an inverse matrix $A^{-1}$, and the entry of $A^{-1}$ located at the $i-$th row and $j-$th column is calculated by taking the cofactor $(A^{*})_{ji}$ and dividing it by $\det A$, i.e.,
	\begin{equation*}
		(A^{-1})_{ij}=\frac{(A^*)_{ji}}{\det A}.
	\end{equation*}
	
\end{Lemma}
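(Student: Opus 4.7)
The plan is to verify the adjugate formula by directly checking that the candidate inverse $B$ defined by $B_{ij}=(A^*)_{ji}/\det A$ satisfies $AB=I$. The main tool is the Laplace cofactor expansion of the determinant, which for any fixed row index $i$ gives
\begin{equation*}
\det A=\sum_{k=1}^{n}a_{ik}(A^*)_{ik}.
\end{equation*}

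First I would compute the $(i,j)$ entry of the product $AB$:
\begin{equation*}
(AB)_{ij}=\sum_{k=1}^{n}a_{ik}B_{kj}=\frac{1}{\det A}\sum_{k=1}^{n}a_{ik}(A^*)_{jk}.
\end{equation*}
When $i=j$ the inner sum is exactly the Laplace expansion along row $i$, so it equals $\det A$, giving $(AB)_{ii}=1$. When $i\neq j$ I would interpret the inner sum as the Laplace expansion (along row $j$) of the determinant of a modified matrix $\widetilde A$ obtained from $A$ by replacing row $j$ by row $i$; since the cofactor $(A^*)_{jk}$ only depends on the rows of $A$ other than row $j$, this cofactor coincides with the corresponding cofactor of $\widetilde A$. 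Because $\widetilde A$ has two identical rows (row $i$ and row $j$), $\det\widetilde A=0$, so $(AB)_{ij}=0$. Hence $AB=I$.

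A symmetric computation using the Laplace expansion along a column gives $BA=I$, confirming that $B$ is a two-sided inverse. Uniqueness of the inverse of a nonsingular matrix then identifies $B$ with $A^{-1}$, yielding the stated formula $(A^{-1})_{ij}=(A^*)_{ji}/\det A$.

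There is no real obstacle here: the only thing to be careful about is the index bookkeeping, in particular the transposition in the formula (cofactor $(A^*)_{ji}$ rather than $(A^*)_{ij}$), which is exactly what encodes the passage from the cofactor matrix to the adjugate. Since this lemma is entirely standard and is used in the paper only as a tool for later computations with $V(t,\alpha)^{-1}$, I would keep the proof short, invoking the Laplace expansion without reproducing its derivation.
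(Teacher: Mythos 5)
Your argument is correct: the identity $\sum_{k}a_{ik}(A^*)_{jk}=\delta_{ij}\det A$ via Laplace expansion (and the two-equal-rows observation for $i\neq j$) is exactly the standard justification of the adjugate formula, and the index transposition is handled properly. The paper itself offers no proof of this lemma, stating it as a fundamental fact of linear algebra, so your short derivation is entirely adequate and consistent with how the result is used later.
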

\begin{Lemma}\label{Lem:2.4}
	A $C^{2}$ \textit{mapping} $x:\mathbb{R}^n\to \mathbb{R}^n,x=\varphi(\alpha)$ is a global $C^{2}$ diffeomorphism, and we suppose further that there exist positive constants $N,\, M$ such that
	\begin{equation*}
		\left|\det\left(\frac{\partial\varphi^i(\alpha)}{\partial\alpha^j}\right)\right|\geqslant N ,\quad \left|\frac{\partial\varphi^i(\alpha)}{\partial\alpha^j}\right|\leqslant M,\quad\left|\frac{\partial^2\varphi^i(\alpha)}{\partial\alpha^j\partial\alpha^k}\right|\leqslant M.
	\end{equation*}
	Then there exists $M_2>0$ such that
	\begin{equation*}
		\left|\frac{\partial^2\alpha^i}{\partial x^j\partial x^k}\right|\leqslant M_2.
	\end{equation*}
\end{Lemma}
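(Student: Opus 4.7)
\medskip

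\noindent\textbf{Proof proposal for Lemma \ref{Lem:2.4}.}

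The plan is to derive the bound in two stages: first estimate the first-order derivatives $\partial\alpha^{i}/\partial x^{j}$ via the cofactor formula (Lemma \ref{Lem:2.3}), and then express the second-order derivatives $\partial^{2}\alpha^{i}/\partial x^{j}\partial x^{k}$ as the solution of a linear system whose matrix is $(\partial\varphi^{i}/\partial\alpha^{m})$ and whose right-hand side is already controlled.

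First I would write $A(\alpha)=(\partial\varphi^{i}/\partial\alpha^{j})$ and note that, because $\varphi$ is a global $C^{2}$ diffeomorphism, the inverse $\alpha=\varphi^{-1}(x)$ is of class $C^{2}$ and its Jacobian equals $A^{-1}$. Applying Lemma \ref{Lem:2.3},
$$
\frac{\partial\alpha^{i}}{\partial x^{j}}=\frac{(A^{*})_{ji}}{\det A}.
$$
Since each cofactor $(A^{*})_{ji}$ is an algebraic sum of at most $(n-1)!$ products of $n-1$ entries of $A$, the hypothesis $|\partial\varphi^{i}/\partial\alpha^{j}|\le M$ gives $|(A^{*})_{ji}|\le (n-1)!\,M^{n-1}$, and together with $|\det A|\ge N$ we obtain a uniform bound
$$
\left|\frac{\partial\alpha^{i}}{\partial x^{j}}\right|\le M_{1}:=\frac{(n-1)!\,M^{n-1}}{N}.
$$

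Next I would differentiate the identity $\varphi^{i}(\alpha(x))=x^{i}$ twice. The first derivative gives $\sum_{m}(\partial\varphi^{i}/\partial\alpha^{m})(\partial\alpha^{m}/\partial x^{j})=\delta^{i}_{j}$, and differentiating once more in $x^{k}$ yields
$$
\sum_{m}\frac{\partial\varphi^{i}}{\partial\alpha^{m}}\,\frac{\partial^{2}\alpha^{m}}{\partial x^{j}\partial x^{k}}
=-\sum_{m,l}\frac{\partial^{2}\varphi^{i}}{\partial\alpha^{m}\partial\alpha^{l}}\,\frac{\partial\alpha^{m}}{\partial x^{j}}\,\frac{\partial\alpha^{l}}{\partial x^{k}}.
$$
The right-hand side is bounded in absolute value by $n^{2}M\,M_{1}^{2}$ thanks to the hypothesis on the second derivatives of $\varphi$ and the estimate just obtained. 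Viewing this as a linear system for the unknowns $\partial^{2}\alpha^{m}/\partial x^{j}\partial x^{k}$ (with fixed $j,k$), I invert $A$ using Lemma \ref{Lem:2.3} again and arrive at
$$
\left|\frac{\partial^{2}\alpha^{m}}{\partial x^{j}\partial x^{k}}\right|\le M_{2}:=n^{3}M\,M_{1}^{3},
$$
which is the desired bound.

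The only step that requires any care is checking that the constant $M_{1}$ produced in the first stage is truly uniform in $\alpha$; this is immediate from the uniform-in-$\alpha$ hypotheses $|\partial\varphi^{i}/\partial\alpha^{j}|\le M$ and $|\det(\partial\varphi^{i}/\partial\alpha^{j})|\ge N$, so no compactness or continuity argument is needed. Beyond that, everything reduces to multilinear algebra of Jacobian matrices, and the main ``obstacle'' is merely bookkeeping of the combinatorial constants $(n-1)!,\,n^{2},\,n^{3}$ that come from expanding determinants and summations over repeated indices.
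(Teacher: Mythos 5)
Your proposal is correct, and the first stage coincides with the paper's: both bound $\bigl|\partial\alpha^{i}/\partial x^{j}\bigr|$ by $(n-1)!M^{n-1}/N$ via the cofactor formula of Lemma \ref{Lem:2.3} together with $|\det A|\ge N$. Where you diverge is the second stage. The paper differentiates the explicit formula $\partial\alpha^{i}/\partial x^{j}=(A^{*})_{ji}/\det A$ with respect to $x^{k}$ by the chain rule, and then bounds $\partial_{\alpha^{l}}\det A$ and $\partial_{\alpha^{l}}(A^{*})_{ji}$ by expanding them as sums of products of first derivatives of $\varphi$ (to the $(n-1)$-th, resp.\ $(n-2)$-th, power) times second derivatives of $\varphi$; this requires the combinatorial bookkeeping on derivatives of determinants and cofactors and produces their explicit constant $M_{2}$. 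You instead differentiate the identity $\varphi^{i}(\alpha(x))=x^{i}$ twice, obtaining the linear system $\sum_{m}(\partial\varphi^{i}/\partial\alpha^{m})\,\partial^{2}\alpha^{m}/\partial x^{j}\partial x^{k}=-\sum_{m,l}(\partial^{2}\varphi^{i}/\partial\alpha^{m}\partial\alpha^{l})(\partial\alpha^{m}/\partial x^{j})(\partial\alpha^{l}/\partial x^{k})$, whose right-hand side is already controlled by $n^{2}MM_{1}^{2}$, and then apply the inverse bound $|(A^{-1})_{mi}|\le M_{1}$ once more to get $M_{2}=n^{3}MM_{1}^{3}$; I checked the identity and the constant, and both are sound. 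Your route buys a cleaner argument that avoids differentiating $\det A$ and the cofactors altogether, at the cost of one extra inversion of $A$; the paper's route has the (minor) advantage of working directly with the closed-form expression for the inverse Jacobian, which it reuses elsewhere. Either constant is uniform in $\alpha$, as you note, since the hypotheses of Lemma \ref{Lem:2.4} are themselves uniform, so no gap remains in your argument.
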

\begin{proof}
	Let $A=\left(\frac{\partial\varphi^i(\alpha)}{\partial\alpha^j}\right)$, by Lemma \ref{Lem:2.3}, we get
	\begin{equation*}
		\left|\frac{\partial\alpha^i}{\partial x^j}\right|=\left|\frac{1}{\det A}\cdot(A^*)_{ji}\right|\leqslant \frac{(n-1)!M^{n-1}}{N},
	\end{equation*}
	by the chain rule, we obtain
	\begin{equation*}
		\frac{\partial^2\alpha^i}{\partial x^j\partial x^k}=-\frac{1}{(\det A)^2}\cdot(A^*)_{ji}\cdot\sum_{l=1}^{n}\frac{\partial\det A}{\partial \alpha^l}\cdot\frac{\partial\alpha^l}{\partial x^k}+\frac{1}{\det A}\cdot\sum_{l=1}^{n}\frac{\partial (A^*)_{ji}}{\partial \alpha^l}\cdot\frac{\partial\alpha^l}{\partial x^k},
	\end{equation*}
	note that $\det A$ can be written as the sum of the $n$-th power of $\frac{\partial \varphi^i}{\partial\alpha^j}$, it follows that $\frac{\partial\det A}{\partial\alpha^l}$ can be written as a sum of terms, each involving the product of the
	$(n-1)$-th power of $\frac{\partial \varphi^i}{\partial\alpha^j}$ and the second-order derivative $\frac{\partial^2\varphi^i}{\partial\alpha^j\partial\alpha^k}$. Similarly, we can obtain $\frac{\partial(A^*)_{ji}}{\partial\alpha^l}$ can be expressed as a sum of the product of the
	$(n-2)$-th power of $\frac{\partial \varphi^i}{\partial\alpha^j}$ and the second order derivative $\frac{\partial^2\varphi^i}{\partial\alpha^j\partial\alpha^k}$. Therefore, we can obtain
	\begin{equation}
		\left|\frac{\partial^2\alpha^i}{\partial x^j\partial x^k}\right|\leqslant\left[\frac{(n!)^2M^{2n-1}}{N^2}+\frac{(n-1)\cdot(n-1)!M^{n-1}}{N}\right] 	\cdot\left(\frac{(n-1)!M^{n-1}}{N}\right):=M_2.
	\end{equation}
\end{proof}
\section{Proof of Theorems \ref{thm:1} and \ref{thm:s}}\label{sec:2}

Now we reconsider	the Cauchy problem of the relativistic Burgers equations by the characteristic method, which is also studied in \cite{Huo-Wei}
\begin{equation}\label{2.1}
	\left\{\begin{aligned}
		&\frac{\partial v^i}{\partial t}+\sum_{j=1}^{n}\frac{v^j}{a(t)}\frac{\partial v^i}{\partial x^j}+\frac{\dot{a}(t)}{a(t)}v^i-\frac{\dot{a}(t)}{a(t)}\frac{|v|^2}{c^2}v^i=0,\quad i=1,2,\cdots,n,\\
		&v^i(0,x)=\varepsilon v_0^i(x).
	\end{aligned}\right.
\end{equation}

We first define the family
of characteristics by
\begin{equation}\label{2.2}
	\left\{\begin{aligned}
		&\frac{\mathrm{d} x^i(t, \alpha)}{\mathrm{d} t}=\frac{v^i(t, \alpha)}{a(t)}, \\
		&x^i(0, \alpha)=\alpha^i,
	\end{aligned}\right.\quad i=1,2,\cdots,n,
\end{equation}
where $v^{i}(t,\alpha)=v^{i}(t,x^{1}(t,\alpha),\cdots,x^{n}(t,\alpha))$.

Along the characteristics defined above, we get
\begin{equation}\label{2.3}
	\left\{\begin{aligned}
		&\frac{\mathrm{d}v^i(t,\alpha)}{\mathrm{d}t}=-\frac{\dot{a}(t)}{a(t)}v^i(t,\alpha)+\frac{\dot{a}(t)}{a(t)}\frac{|v(t,\alpha)|^2}{c^2}v^i(t,\alpha),\\
		&v^i(0,\alpha)=\varepsilon v_0^i(\alpha),
	\end{aligned}\right.\quad i=1,2,\cdots,n.
\end{equation}

Let $u(t,\alpha)=\sum_{i=1}^{n}|v^{i}(t,\alpha)|^2$, then	\begin{equation}\label{2.4}
	\left\{\begin{aligned}
		&\frac{\mathrm{d}u(t,\alpha)}{\mathrm{d}t}=-2\frac{\dot{a}(t)}{a(t)}u(t,\alpha)+2\frac{\dot{a}(t)}{a(t)}\frac{u^2(t,\alpha)}{c^2}, \\
		&u(0,\alpha)=\varepsilon^2|v_0(\alpha)|^2.
	\end{aligned}\right.
\end{equation}

It is easy to obtain from \eqref{2.4}
\begin{equation}\label{|v|^2}
	\frac{u(t,\alpha)}{c^2-u(t,\alpha)}=\frac{f_0^2(\alpha)}{a^2(t)},
\end{equation}
where $ f_0(\alpha)=\frac{\varepsilon |v_0(\alpha)|}{\sqrt{c^2-\varepsilon^2|v_0(\alpha)|^2}}$,
then
\begin{equation}\label{u}
	|v(t,\alpha)|^2=u(t,\alpha)=\frac{c^2f_0^2(\alpha)a^{-2}(t)}{1+f_0^2(\alpha)a^{-2}(t)}.
\end{equation}

Inserting \eqref{u} into \eqref{2.3}, we get
\begin{equation}\label{3.6}
	v^i(t,\alpha)=\frac{cg_0^i(\alpha)a^{-1}(t)}{\sqrt{1+f_0^2(\alpha)a^{-2}(t)}},\quad i=1,2,\cdots,n，
\end{equation}
where $ g_0^i(\alpha)=\frac{\varepsilon v_0^i(\alpha)}{\sqrt{c^2-\varepsilon^2|v_0(\alpha)|^2}}.$

According to \eqref{2.2}, we have
\begin{equation}\label{2.5}
	x^i(t,\alpha)=\alpha^i+cg_0^i(\alpha)\int_0^t\frac{a^{-2}(s)}{\sqrt{1+f_0^2(\alpha)a^{-2}(s)}}\mathrm{d}s.
\end{equation}
By \eqref{3.6} and \eqref{2.5}, we obtain
\begin{equation}\label{3.8}
	\frac{\partial v^i}{\partial x^j}=\sum_{l=1}^{n}\frac{\partial v^i}{\partial \alpha^l}\frac{\partial \alpha^{l} }{\partial x^j}.
\end{equation}
In the following, in order to show that $v(t,x)\in [0,T(\varepsilon))\times \mathbb R^n)$, we need to prove $v^i$ and $\frac{\partial v^i}{\partial x^j}$ are bounded in the existence domain of the classical solution.

\subsection{Proof of Theorem \ref{thm:1}}
\begin{proof}
	Since $v_0$ is a bounded $C^2$ function, we can assume that there exists a positive constant $N_0>0$ such that
	\begin{equation}\label{f_0estimate}
		\|v_0\|_{C^2(\mathbb{R}^n)}\leqslant N_0.
	\end{equation}
	Here, $ \lVert \varPhi\rVert _{ C^{2}(\mathbb R^n)} := \lVert \varPhi\rVert _{ C^{0}(\mathbb R^n)} + \lVert \partial\varPhi\rVert _{ C^{0}(\mathbb R^n)} + \lVert \partial^{2}\varPhi\rVert _{ C^{0}(\mathbb R^n)} $ and $\|\varPhi(\cdot)\|_{C^{0}(\mathbb R^n)}:=\sup_{x\in \mathbb R^n}|\varPhi(x)|$.
	
	Let $	M_0=n\sqrt{n}N_0,
	$ then direct calculations give
	\begin{align}
		\|f_0\|_{C^0(\mathbb{R}^n)}	&\leqslant\frac{\varepsilon M_0}{\sqrt{c^2-\varepsilon^2M_0^2}},\\
		\Biggl|\frac{\partial f_0}{\partial\alpha^j}\Biggl|	&=\left|\frac{c^2\varepsilon\frac{\partial|v_0|}{\partial\alpha^j}}{(c^2-\varepsilon^2|v_0|^2)^{3/2}}\right|\leqslant\frac{\sqrt{n}c^2\varepsilon N_0}{(c^2-\varepsilon^2M_0^2)^{3/2}},\\
		\Big\|\frac{\partial f_0}{\partial\alpha}\Big\|_{C^0(\mathbb{R}^n)}	&=\sum_{j=1}^{n}\Biggl|\frac{\partial f_0}{\partial\alpha^j}\Biggl|\leqslant\frac{n\sqrt{n}c^2\varepsilon N_0}{(c^2-\varepsilon^2M_0^2)^{3/2}}=\frac{c^2\varepsilon M_0}{(c^2-\varepsilon^2M_0^2)^{3/2}} ,\\
		\|g_0\|_{C^0(\mathbb{R}^n)}&\leqslant\frac{\varepsilon M_0}{\sqrt{c^2-\varepsilon^2M_0^2}} ,\\
		\Biggl|\frac{\partial g_0^i}{\partial\alpha^j}\Biggl|&=\left|\frac{c^2\varepsilon \frac{\partial v_0^i}{\partial\alpha^j}+\varepsilon^3|v_0|(v_0^i\frac{\partial|v_0|}{\partial\alpha^j}-|v_0|\frac{\partial v_0^i}{\partial\alpha^j})}{(c^2-\varepsilon^2|v_0|^2)^{3/2}}\right|\leqslant\frac{c^2\varepsilon N_0+2\sqrt{n}\varepsilon^3N_0^3}{(c^2-\varepsilon^2|v_0|^2)^{3/2}} ,\\
		\Big\|\frac{\partial g_0^i}{\partial\alpha}\Big\|_{C^0(\mathbb{R}^n)}&=\sum_{j=1}^{n}\Biggl|\frac{\partial g_0^i(\alpha)}{\partial\alpha^j}\Biggl|\leqslant\left|\frac{nc^2\varepsilon N_0+2n\sqrt{n}\varepsilon^3N_0^3}{(c^2-\varepsilon^2|v_0|^2)^{3/2}}\right|\leqslant\frac{c^2\varepsilon M_0+2\varepsilon^3M_0^3}{(c^2-\varepsilon^2|v_0|^2)^{3/2}}.\label{g_0/alphaestimate}
	\end{align}
	The above estimates can guarantee that $|v|$ and $\frac{\partial v^i}{\partial \alpha^{l}}$ are bounded.
	
	Next, by \eqref{2.5}, we can show that $\frac{\partial \alpha^l}{\partial x^j}$ is also bounded.
	
	Let
	\begin{equation*}
		h^i(t,\alpha)=cg_0^i(\alpha)\int_0^t\frac{a^{-2}(s)}{\sqrt{1+f_0^2(\alpha)a^{-2}(s)}}\mathrm{d}s,
	\end{equation*}
	then we have
	\begin{equation}\label{matrix}
		\frac{\partial x^i}{\partial \alpha^j}=\delta^{ij}+\frac{\partial h^{i}(t,\alpha)}{\partial \alpha^j},
	\end{equation}
	where $\delta^{ij}=1$ if $i=j$, $\delta^{ij}=0$ if $i\neq j$, and
	\begin{align}\label{2.8}
		\frac{\partial h^i}{\partial \alpha^j}=&c\frac{\partial g_0^i(\alpha)}{\partial\alpha^j}\int_0^t\frac{a^{-2}(s)}{(1+f_0^2(\alpha)a^{-2}(s))^{3/2}}\mathrm{d}s \nonumber\\
		+&cf_0(\alpha)\Big[f_0(\alpha)\frac{\partial g_0^i(\alpha)}{\partial\alpha^j}-g_0^i(\alpha)\frac{\partial f_0(\alpha)}{\partial\alpha^j}\Big]\int_0^t\frac{a^{-4}(s)}{(1+f_0^2(\alpha)a^{-2}(s))^{3/2}}\mathrm{d}s,
	\end{align}
	for $t\in[0,T(\varepsilon))$, we can obtain
	\begin{align}
		\|h(t,\cdot)\|_{C^0(\mathbb{R}^n)}&\leqslant\frac{c\varepsilon M_0}{\sqrt{c^2-\varepsilon^2 M_0^2}}\int_{0}^{t}a^{-2}(s)\mathrm{d}s ,\label{2.9}\\
		\Biggl|\frac{\partial h^i(t,\cdot)}{\partial\alpha^j}\Biggl|&\leqslant\frac{2c(c^2\varepsilon M_0+\varepsilon^3M_0^3)}{(c^2-\varepsilon^2 M_0^2)^{3/2}}\int_{0}^{t}a^{-2}(s)\mathrm{d}s+\frac{2c\varepsilon^2M_0^2(c^2\varepsilon M_0+\varepsilon^3M_0^3)}{(c^2-\varepsilon^2 M_0^2)^{5/2}}\int_{0}^{t}a^{-4}(s)\mathrm{d}s. \label{2.10}
	\end{align}
	
	In the following, we split into three cases according to Assumptions $H_1$-$H_3$ to investigate the properties of the characteristics $x(t,\alpha)$ and $v$. When $a(t)$ satisfies Assumption $H_4$, the classical Burgers equation has been studied clearly, see \cite{Li}.
	
	\textbf{Case I.}\quad Under Assumption $H_1$: $\dot{a}(t)>0$ and $\ddot{a}(t)\geqslant0$,
	it holds that
	\begin{align}\label{H1inta}
		a(t)&=1+\int_{0}^{t}\dot{a}(s)\mathrm{d}s\geqslant1+\dot{a}(0)t,\\
		\int_{0}^{t}a^{-4}(s)\mathrm{d}s&\leqslant\int_{0}^{t}a^{-2}(s)\mathrm{d}s\leqslant\int_{0}^{\infty}[1+\dot{a}(0)s]^{-2}\mathrm{d}s=\frac{1}{\dot{a}(0)}.
	\end{align}
	
	According to \eqref{2.10}, we have
	\begin{align*}
		\Biggl|\frac{\partial h^i(t,\cdot)}{\partial\alpha^j}\Biggl|\leqslant&\frac{2c(c^2\varepsilon M_0+\varepsilon^3M_0^3)}{(c^2-\varepsilon^2 M_0^2)^{3/2}}\int_{0}^{t}a^{-2}(s)\mathrm{d}s+\frac{2c\varepsilon^2M_0^2(c^2\varepsilon M_0+\varepsilon^3M_0^3)}{(c^2-\varepsilon^2 M_0^2)^{5/2}}\int_{0}^{t}a^{-4}(s)\mathrm{d}s \\
		\leqslant&\frac{1}{\dot{a}(0)}\frac{2c\varepsilon M_0(c^4+\varepsilon^4M_0^4+2c^2\varepsilon^2 M_0^2)}{(c^2-\varepsilon^2 M_0^2)^{5/2}} \\
		\leqslant&\frac{1}{2n},
	\end{align*}
	provided
	\begin{equation*}
		\varepsilon\leqslant\varepsilon_1=\min\left\{\frac{c\sqrt{1-\delta^{2/5}}}{M_0},\eta_1,1\right\},
	\end{equation*}
	where $0<\delta<1$ is a constant, and $\eta_1 M_0+\frac{\eta_1^3M_0^3}{c^2}=\frac{\dot{a}(0)}{4n}\frac{\delta c^2}{M_0^2+c^2}$.
	
	For convenience of notations, we introduce
	$$
	\Omega(c,\varepsilon,M_0):=\frac{2c\varepsilon M_0(c^4+\varepsilon^4M_0^4+2c^2\varepsilon^2 M_0^2)}{(c^2-\varepsilon^2 M_0^2)^{5/2}}.
	$$
	
	By \eqref{2.9}, we have
	\begin{align*}
		|h(t,\alpha)|\leqslant&\frac{c\varepsilon M_0}{\sqrt{c^2-\varepsilon^2 M_0^2}}\int_{0}^{t}a^{-2}(s)\mathrm{d}s \\
		\leqslant&\frac{c\varepsilon M_0}{\sqrt{c^2-\varepsilon^2 M_0^2}}\\
		\leqslant&\frac{1}{4n}\delta^{4/5},
	\end{align*}
	Thus, in this case, $|\frac{\partial x^i}{\partial \alpha^l}|$ is bounded and the matrix \eqref{matrix} is a diagonally dominant matrix, this leads to $\det\left(\frac{\partial x}{\partial \alpha}\right)>0$ by Corollary \ref{cor:2.1}. Combining Lemma \ref{Lem:2.3}, we can easily see that $|\frac{\partial \alpha^l}{\partial x^j}|$ is bounded.
	
	Thus, in this case, we have $T(\varepsilon)=\infty$.
	
	\textbf{Case II.} Under the Assumption $H_2$:  $\dot{a}(t)>0,\,\ddot{a}(t)<0$, and $0<\delta_0<1$ such that $a(t)\geqslant(1+t)^{\frac{1+\delta_0}{2}}$.
	We have
	\begin{align}\label{H2inta}
		\int_{0}^{t}a^{-4}(s)\mathrm{d}s\leqslant\int_{0}^{t}a^{-2}(s)\mathrm{d}s\leqslant&\int_{0}^{\infty}a^{-2}(t)\mathrm{d}t \nonumber\\
		\leqslant&\int_{0}^{\infty}(1+t)^{-1-\delta_0}\mathrm{d}t=\delta_0^{-1}.
	\end{align}
	
	By \eqref{2.10}, we get
	\begin{align*}
		\Biggl|\frac{\partial h^i(t,\cdot)}{\partial\alpha^j}\Biggl|\leqslant&\frac{2c(c^2\varepsilon M_0+\varepsilon^3M_0^3)}{(c^2-\varepsilon^2 M_0^2)^{3/2}}\int_{0}^{t}a^{-2}(s)\mathrm{d}s+\frac{2c\varepsilon^2M_0^2(c^2\varepsilon M_0+\varepsilon^3M_0^3)}{(c^2-\varepsilon^2 M_0^2)^{5/2}}\int_{0}^{t}a^{-4}(s)\mathrm{d}s \\
		\leqslant&\frac{1}{\delta_0}\Omega(c,\varepsilon,M_0) \\
		\leqslant&\frac{1}{2n},
	\end{align*}
	when
	\begin{equation*}
		\varepsilon\leqslant\varepsilon_2=\min\left\{\frac{c\sqrt{1-\delta^{2/5}}}{M_0},\eta_2,1\right\},
	\end{equation*}
	where $0<\delta<1$ is a constant, and $\eta_2 M_0+\frac{\eta_2^3M_0^3}{c^2}=\frac{\delta_0}{2n}\frac{c^2}{M_0^2+c^2}$.
	
	According to \eqref{2.9}, we have
	\begin{align*}
		|h(t,\alpha)|\leqslant&\frac{c\varepsilon M_0}{\sqrt{c^2-\varepsilon^2 M_0^2}}\int_{0}^{t}a^{-2}(s)\mathrm{d}s \\
		\leqslant&\frac{1}{\delta_0}\cdot\frac{c\varepsilon M_0}{\sqrt{c^2-\varepsilon^2 M_0^2}} \\
		\leqslant&\frac{1}{4n}\delta^{4/5}.
	\end{align*}
	
	Similar to \textbf{Case I}, we can obtain $T(\varepsilon)=\infty$.
	
	Combining \textbf{Case I} and \textbf{Case II}, we can conclude that when $a(t)$ satisfies Assumption $H_{1}$ or $H_{2}$, the Cauchy problems \eqref{equation} have a global classical solution $v(t, x)\in C^{1}([0,+\infty)\times \mathbb R^n)$.
	
	\textbf{Case III.} Under Assumption $H_3$, i.e., $a(t)\sim(1+t)^l$, $0< l\leqslant\frac{1}{2}$, we have
	\begin{align*}
		\int_{0}^{t}a^{-2}(s)\mathrm{d}s=\int_{0}^{t}(1+s)^{-2l}\mathrm{d}s=\begin{cases}
			\ln(1+t),\quad l=\frac{1}{2}, \\
			\frac{(1+t)^{1-2l}-1}{1-2l},\quad 0< l<\frac{1}{2},
		\end{cases}
	\end{align*}

	Then by \eqref{2.10}, we can obtain
	\begin{align*}
		\left|\frac{\partial h^i(t,\alpha)}{\partial\alpha^j}\right|\leqslant\begin{cases}
			\Omega(c,\varepsilon,M_0)\ln(1+t),\quad l=\frac{1}{2}, \\
			\Omega(c,\varepsilon,M_0)\cdot\frac{(1+t)^{1-2l}-1}{1-2l},\quad 0< l<\frac{1}{2},
		\end{cases}
	\end{align*}
	if we suppose $\Big|\frac{\partial h^i(t,\alpha)}{\partial\alpha^j}\Big|\leq \frac{1}{2n}$, then we have
	\begin{equation*}
		t_*(\varepsilon)=\left\{\begin{aligned}
			&\exp\left(\frac{1}{n\Omega(c,\varepsilon,M_0)}\right)-1,\quad l=\frac{1}{2}, \\
			&\left(\frac{1}{n\Omega(c,\varepsilon,M_0)}+1\right)^{1/(1-2l)}-1,\quad 0< l<\frac{1}{2}.
		\end{aligned}\right.
	\end{equation*}
	This is guaranteed provided that
	$$\varepsilon\leqslant\varepsilon_3:=\frac{c\sqrt{1-\delta^{2/5}}}{M_0}.$$
	
	Therefore, the life span $T(\varepsilon)$ of the classical solution satisfies
	\begin{equation}
		T(\varepsilon)(\leqslant t_*(\varepsilon))=\left\{\begin{aligned}
			&\exp\left(\frac{\delta c^2}{4\varepsilon nM_0(c^2+M_0^2)}\right)-1,\quad l=\frac{1}{2}, \\
			&\left(\frac{(1-2l)\delta c^2}{4\varepsilon nM_0(c^2+M_0^2)}+1\right)^{1/(1-2l)}-1,\quad 0< l<\frac{1}{2}.
		\end{aligned}\right.
	\end{equation}
	\begin{Remark}
		The main idea for us to get the life span of the classical solution is to ensure the determinant of $\Big|\frac{\partial x}{\partial \alpha}(t,\alpha)\Big|>0$ for all $t\in[0,T(\epsilon)]$ based on Lemmas \ref{Lem:2.2} and \ref{Lem:2.3}.
	\end{Remark}
	Next, we focus on the blowup criterion of the solution according to the dimensions.
	
	When $n=1$,  we have $f_0=g_0^i$, and
	\begin{equation*}
		\frac{\partial x(t,\alpha)}{\partial \alpha}=1+cf_0'(\alpha)\int_{0}^{t}\frac{a^{-2}(s)}{(1+f_0^2(\alpha)a^{-2}(s))^{3/2}}\mathrm{d}s,
	\end{equation*}
	where
	\begin{equation*}
		f_0'(\alpha)=\frac{c^2\varepsilon v_0'(\alpha)}{(c^2-\varepsilon v_0^2(\alpha))^{3/2}}.
	\end{equation*}
	
	If $v_0'(\alpha)\geqslant0,$ then $	\frac{\partial x(t,\alpha)}{\partial \alpha}\geqslant1$, we can obtain that  $T(\varepsilon)=\infty$.
	
	If  $\displaystyle{\inf_{\alpha}}\, v_0'(\alpha)<0$, then there exists an $\alpha$ such that $v_0'(\alpha)<0$, and \begin{equation*}
		\frac{\partial x(t,\alpha)}{\partial \alpha}\leqslant1+\frac{cf_0'(\alpha)}{(1+f_0^2(\alpha))^{3/2}}\int_{0}^{t}a^{-2}(s)\mathrm{d}s\leqslant1+\frac{\varepsilon v_0'(\alpha)}{(1+f_0^2(\alpha))^{3/2}}\int_{0}^{t}a^{-2}(s)\mathrm{d}s.
	\end{equation*}
	
	By $a(t)\sim(1+t)^l$, we have
	\begin{equation*}
		\frac{\partial x(t,\alpha)}{\partial \alpha}\leqslant\left\{\begin{aligned}
			&1+\frac{\varepsilon v_0'(\alpha)}{(1+f_0^2(\alpha))^{3/2}}\cdot\ln(1+t),\quad l=\frac{1}{2}, \\
			&1+\frac{\varepsilon v_0'(\alpha)}{(1+f_0^2(\alpha))^{3/2}}\cdot\frac{(1+t)^{1-2l}-1}{1-2l},\quad 0< l<\frac{1}{2},
		\end{aligned}\right.
	\end{equation*}
	and we can also obtain $	\frac{\partial x(t,\alpha)}{\partial \alpha}$ tends to $0$ when $t$ tends to $t_1(\varepsilon)$, which is defined by
	\begin{equation}
		t_1(\varepsilon)=\left\{\begin{aligned}
			&\exp\left(-\frac{(1+f_0^2(\alpha))^{3/2}}{\varepsilon v_0'(\alpha)}\right)-1,\quad l=\frac{1}{2}, \\
			&\left(-\frac{(1-2l)(1+f_0^2(\alpha))^{3/2}}{\varepsilon v_0'(\alpha)}+1\right)^{1/(1-2l)}-1,\quad 0< l<\frac{1}{2}.
		\end{aligned}\right.
	\end{equation}
	
	
	When $n>1$, let $I$ be the identity matrix, then
	\begin{align}\label{3.24}
		\frac{\partial x}{\partial \alpha}&=I+\left(\frac{\partial v_0}{\partial \alpha}\right)\frac{c^3\varepsilon}{[c^2-\varepsilon^2|v_0|^2]^{3/2}}\int_{0}^{t}\frac{a^{-2}(s)}{[1+f_0^2(\alpha)a^{-2}(s)]^{3/2}}\mathrm{d}s \nonumber\\
		&-(|v_0|^2I-v_0^Tv_0)\left(\frac{\partial v_0}{\partial \alpha}\right)\frac{c\varepsilon^3}{[c^2-\varepsilon^2|v_0|^2]^{3/2}}\int_{0}^{t}\frac{a^{-2}(s)-a^{-4}(s)}{[1+f_0^2(\alpha)a^{-2}(s)]^{3/2}}\mathrm{d}s,
	\end{align}
	this expression is directly derived from equations \eqref{matrix} and \eqref{2.8}.
	
	By Hadamard's Lemma \ref{rem:2.1}, we can see that the system has a global classical condition if and only if $\det\left(\frac{\partial x}{\partial \alpha}\right)\neq 0$. Since when $t=0$, $\det\left(\frac{\partial x}{\partial \alpha}\right)=1$, by the continuity of $\det\left(\frac{\partial x}{\partial \alpha}\right)$, $\det\left(\frac{\partial x}{\partial \alpha}\right)\neq 0$ is equivalent to $\det\left(\frac{\partial x}{\partial \alpha}\right)> 0$ for all $t>0$ and $\alpha\in \mathbb R^n$.
	
	By the definition of $\nabla v$, the solution must blow up if and only if there exists an $\alpha'\in\mathbb{R}^n$ such that $\displaystyle\det\left(\frac{\partial x}{\partial\alpha}(t',\alpha')\right)=0$ for $0<t'<\infty$.
\end{proof}

\subsection{Proof of Theorem \ref{thm:s}.}
In the following, we show that $\displaystyle\inf_{\alpha}\det\left(\frac{\partial v_0}{\partial \alpha}\right)<0$ can ensure the blowup of $|\nabla v|$ in finite time.
\begin{proof}
	
	Let
	\begin{equation*}
		K^{ij}(\alpha)=\left(\frac{\partial v_0^i}{\partial\alpha^j}\right)|v_0|^2-v_0^i\sum_{k=1}^{n}v_0^k\frac{\partial v_0^k}{\partial\alpha^j},
	\end{equation*}
	then we have
	\begin{align*}
		\frac{\partial h^i(t,\alpha)}{\partial\alpha^j}&=c\varepsilon\frac{c^2\frac{\partial v_0^i}{\partial\alpha^j}-\varepsilon^2K^{ij}}{[c^2-\varepsilon^2|v_0|^2]^{3/2}}\int_{0}^{t}\frac{a^{-2}(s)}{[1+f_0^2(\alpha)a^{-2}(s)]^{3/2}}\mathrm{d}s \nonumber\\
		&+\frac{c\varepsilon^3K^{ij}}{[c^2-\varepsilon^2|v_0|^2]^{3/2}}\int_{0}^{t}\frac{a^{-4}(s)}{[1+f_0^2(\alpha)a^{-2}(s)]^{3/2}}\mathrm{d}s.
	\end{align*}
	
	Denote
	\begin{align*}
		&D^{ij}(\alpha)=\frac{c\varepsilon}{[c^2-\varepsilon^2|v_0|^2]^{3/2}}\cdot\left(c^2\frac{\partial v_0^i}{\partial\alpha^j}-\varepsilon^2K^{ij}\right), \\
		&E^{ij}(\alpha)=\frac{c\varepsilon^3K^{ij}}{[c^2-\varepsilon^2|v_0|^2]^{3/2}}, \\
		&F_2(t,\alpha)=\int_{0}^{t}\frac{a^{-2}(s)}{[1+f_0^2(\alpha)a^{-2}(s)]^{3/2}}\mathrm{d}s, \\
		&F_4(t,\alpha)=\int_{0}^{t}\frac{a^{-4}(s)}{[1+f_0^2(\alpha)a^{-2}(s)]^{3/2}}\mathrm{d}s.
	\end{align*}
	
	Then, we can get
	\begin{equation*}
		I+\left(\frac{\partial h^i(t,\alpha)}{\partial\alpha^j}\right)=\delta^{ij}+D^{ij}(\alpha)F_2(t,\alpha)+E^{ij}(\alpha)F_4(t,\alpha).
	\end{equation*}
	The above explicit expression indicates that the determinant of the matrix $I+\left(\frac{\partial h^i(t,\alpha)}{\partial\alpha^j}\right)$  is a polynomial in
	$F_2(t,\alpha)$ and $F_4(t,\alpha)$
	\begin{equation*}
		\det\left(I+\left(\frac{\partial h^i(t,\alpha)}{\partial\alpha^j}\right)\right)=\sum_{k+s\leqslant n}G^{ks}(\alpha)F_2^k(t,\alpha)F_4^s(t,\alpha),
	\end{equation*}
	where the coefficients $G^{ks}(\alpha)$ is the combination of $D_j^i(\alpha)$ and $E_j^i(\alpha)$, and depend solely on $\alpha$. $k,s$ denotes the degree of $F_2(t,\alpha)$ and $F_{4}(t,\alpha)$, respectively.
	
	It is easy to get
	\begin{equation*}
		\lim_{t\to\infty}\frac{F_4(t,\alpha)}{F_2(t,\alpha)}=\lim_{t\to\infty}\frac{\frac{a^{-4}(t)}{[1+f_0^2(\alpha)a^{-2}(t)]^{3/2}}}{\frac{a^{-2}(t)}{[1+f_0^2(\alpha)a^{-2}(t)]^{3/2}}}=\lim_{t\to\infty}a^{-2}(t)=0.
	\end{equation*}
	
	Then
	\begin{align*}
		\lim_{t\to\infty}\det\left(I+\left(\frac{\partial h^i(t,\alpha)}{\partial\alpha^j}\right)\right)&=\lim_{t\to\infty}F_2^n(t,\alpha)\sum_{k+s\leqslant n}G^{ks}(\alpha)\frac{F_2^k(t,\alpha)F_4^s(t,\alpha)}{F_2^n(t,\alpha)} \nonumber\\
		&=\lim_{t\to\infty}G^{n0}(\alpha)F_2^n(t,\alpha)=-\infty.
	\end{align*}
	Here $$
	G^{n0}(\alpha)=\frac{(c\varepsilon)^n}{[c^2-\varepsilon^2|v_0|^2]^{3n/2}}\sum_{\sigma\in S_n}\mathrm{sgn}(\sigma)\left(\prod_{i=1}^n\left[c^2\frac{\partial v_0^i}{\partial\alpha^{\sigma(i)}}-\varepsilon^2K^{i,\sigma(i)}\right]\right),$$
	where $S_n$ is the set of all permutations of $\{1,2,\cdots,n\}$, $\mathrm{sgn}(\sigma)$ is the sign of the permutation, which is positive for even permutations and negative for odd permutations, and $\sigma(i)$ is the i-th element of the permutation $\sigma$.
	
	If there exists an $\alpha$ such that $\det\left(\frac{\partial v_0}{\partial \alpha}\right)<0$, we have
	\begin{equation*}
		\lim_{\varepsilon\rightarrow 0}\sum_{\sigma\in S_n}\mathrm{sgn}(\sigma)\left(\prod_{i=1}^n\left[c^2\frac{\partial v_0^i}{\partial\alpha^{\sigma(i)}}-\varepsilon^2K^{i,\sigma(i)}\right]\right)=c^2\det\left(\frac{\partial v_0}{\partial \alpha}\right)<0.
	\end{equation*}
	Thus, we can find an $\varepsilon_3$ such that when $\varepsilon\leq \varepsilon_3$, there holds $G^{n0}(\alpha)<0$ for arbitrary $t$, and
	\begin{equation*}
		\lim_{t\to\infty}\det\left(I+\left(\frac{\partial h^i(t,\alpha)}{\partial\alpha^j}\right)\right)=-\infty.
	\end{equation*}
	On the other hand, when $t=0$, it holds that
	\begin{equation*}
		\det\left(I+\left(\frac{\partial h^i(t,\alpha)}{\partial\alpha^j}\right)\right)=1.
	\end{equation*}
	
	So there must exist a $t_2<+\infty$ such that $$	\det\left(I+\left(\frac{\partial h^i(t_2,\alpha)}{\partial\alpha^j}\right)\right)=0.$$
	By the blowup criterion in Theorem \ref{thm:1}, $\nabla v$ blows up when $t$ tends to $t_2$.
\end{proof}
\subsection{Examples for Remark \ref{rem:1.4}}
When $\displaystyle\inf_{\alpha}\det\left(\frac{\partial v_0}{\partial \alpha}\right)\geqslant0$, we can find examples such that the solution admits a global solution or blows up in finite time.

Let $n=2$ and $a(t)=(1+t)^{\frac{1}{2}}$, then $v_0(\alpha)=(v_0^1(\alpha^1,\alpha^2),v_0^2(\alpha^1,\alpha^2))$ and \eqref{3.24} turns into
\begin{equation}\label{3.25}
	\frac{\partial x}{\partial \alpha}=I+\left(\frac{\partial v_0}{\partial \alpha}\right)Y_{1}(t,\alpha)\Phi(t,\alpha)-\left(|v_0|^2I-v_0^Tv_0\right)\left(\frac{\partial v_0}{\partial \alpha}\right)Y_{2}(t,\alpha)\Psi(t,\alpha),
\end{equation}
where
\begin{align*}
	&	Y_{1}=\frac{c^3\varepsilon }{(c^2-\varepsilon^2|v_0|^2)^{\frac{3}{2}}}>0,\\
	&	Y_{2}=\frac{c\varepsilon^3 }{(c^2-\varepsilon^2|v_0|^2)^{\frac{3}{2}}}>0,\\
	&	\Phi(t,\alpha)=\int_{0}^{t}\frac{(1+s)^{-1}}{[1+f_0^2(\alpha)(1+s)^{-1}]^{3/2}}\mathrm{d}s>0,\\
	&	\Psi(t,\alpha)=\int_{0}^{t}\frac{(1+s)^{-1}-(1+s)^{-2}}{[1+f_0^2(\alpha)(1+s)^{-1}]^{3/2}}\mathrm{d}s>0.
\end{align*}

Set $ \delta\geqslant0 $ and let
\begin{align*}
	v_0^1 =\delta\alpha^1+ \arctan\alpha^1,	\,
	v_0^2 = \delta\alpha^2+\arctan\alpha^2,
\end{align*}
then for arbitrary $\alpha$,
\begin{equation*}
	\displaystyle\inf_{\alpha}\det\left(\frac{\partial v_0}{\partial \alpha}\right)\geqslant0.
\end{equation*}


Then
\begin{eqnarray*}
\det\left(\frac{\partial x}{\partial \alpha}\right)&=&\Big[1+(Y_1\Phi(t,\alpha)-(v_0^2)^2Y_2\Psi(t,\alpha))\left(\delta+\frac{1}{1+(\alpha^1)^2}\right)\\
&+&\left(Y_1\Phi(t,\alpha)-(v_0^1)^2Y_2\Psi(t,\alpha)\right)\left(\delta+\frac{1}{1+(\alpha^2)^2}\right)
\\ &+&\left((Y_1\Phi(t,\alpha))^2-Y_1Y_2\Phi(t,\alpha)\Psi(t,\alpha)((v_0^1)^2+(v_0^2)^2)\right)\\
&\times&\left(\delta+\frac{1}{1+(\alpha^1)^2}\right)\left(\delta+\frac{1}{1+(\alpha^2)^2}\right)\Big]>0.
\end{eqnarray*}

In this case, $v$ never blows up in finite time.

However, if we set
\begin{align*}
v_0^1 =-\delta\alpha^1- \arctan\alpha^1,	\,
v_0^2 = -\delta\alpha^2-\arctan\alpha^2,
\end{align*}
then for arbitrary $\alpha$,
\begin{equation*}
\displaystyle\inf_{\alpha}\det\left(\frac{\partial v_0}{\partial \alpha}\right)\geqslant0,
\end{equation*}
and
\begin{eqnarray*}
\det\left(\frac{\partial x}{\partial \alpha}\right)&=&1-(Y_1\Phi(t,\alpha)+(v_0^2)^2Y_2\Psi(t,\alpha))\left(\delta+\frac{1}{1+(\alpha^1)^2}\right)\\
&-&\left(Y_1\Phi(t,\alpha)+(v_0^1)^2Y_2\Psi(t,\alpha)\right)\left(\delta+\frac{1}{1+(\alpha^2)^2}\right)
\\ &+&\left((Y_1\Phi(t,\alpha))^2+Y_1Y_2\Phi(t,\alpha)\Psi(t,\alpha)((v_0^1)^2+(v_0^2)^2)\right)\\
&\times&\left(\delta+\frac{1}{1+(\alpha^1)^2}\right)\left(\delta+\frac{1}{1+(\alpha^2)^2}\right).
\end{eqnarray*}

In this case, when $\alpha =0$,
\begin{align*}
\det\left(\frac{\partial x}{\partial \alpha}\right)=1-2\varepsilon\Phi(t)(\delta+1)
+(\varepsilon\Phi(t)(\delta+1))^2,
\end{align*}
and
\begin{equation*}
\Phi(t)=\int_{0}^{t}(1+s)^{-1}\mathrm{d}s.
\end{equation*}

Thus, when $t=t^{*}=e^{\frac{1}{\varepsilon(\delta+1)}}-1$, $ \Phi(t^{*})=\frac{1}{\varepsilon(\delta+1)} $, we get
\begin{equation*}
\det\left(\frac{\partial x}{\partial \alpha}\right)=0,
\end{equation*}
then we can see that $v$ blows up when $t$ tends to $t^{*}$.





\section{Proof of Theorem \ref{thm:2}
} \label{sec:3}
In this section, based on the results of Theorem \ref{thm:1}, we consider the classical solution of $ \rho $. We assume that $\rho>0$ in the existence domain of the classical solution, then the first equation of \eqref{ME2} can be rewritten as

\begin{align}
\frac{\partial\ln\rho}{\partial t}&+\sum_{i=1}^{n}\left(\frac{v^i}{a(t)}\cdot\frac{\partial\ln\rho}{\partial x^i}\right)+\frac{\dot{a}(t)}{a(t)}\left(\frac{u^2}{c^2}+n\right) \nonumber\\
&+\frac{1}{c^2-u}\frac{\partial u}{\partial t}+\sum_{i=1}^{n}\left(\frac{\partial v^i}{\partial x^i}\frac{1}{a(t)}+\frac{\partial u}{\partial x^i}\frac{v^i}{a(t)(c^2-u)}\right)=0.
\end{align}

Now we consider the Cauchy problem \eqref{ME2},
where $(\rho_{0}(x),v_0(x))\in C^{1}(\mathbb R^n)\times C^{2}(\mathbb R^n)$, and $\rho_0$ has a bounded 
	$C^1$  norm, while $v_0$ has a bounded $C^2$
	  norm. We further assume that there exists a positive constant $ Q_{0} $ such that
$$
\lVert \rho_0\rVert _{ C^{1}(\mathbb R^n)}+\lVert v_{0}\rVert _{ C^{2}(\mathbb R^n)}
\leqslant Q_{0}.
$$

Let $u=|v|^2$, define the characteristic curve $x^i=x^{i}(t,\alpha)$ as
\begin{equation}
\left\{\begin{aligned}
&\frac{\mathrm{d} x^i}{\mathrm{d}t}=\frac{v^i}{a(t)},\quad t>0, \\
&x^i(0,\alpha)=\alpha^i,\quad \alpha\in\mathbb{R}^n,
\end{aligned}\right.
\end{equation}
then along $x^{i}(t,\alpha)$, we have
\begin{equation}\label{3.3}
\left\{\begin{aligned}
&\frac{\mathrm{d}\ln\rho}{\mathrm{d}t}=-\frac{\dot{a}(t)}{a(t)}\left(\frac{u}{c^2}+n\right)-\frac{1}{c^2-u}\frac{\partial u}{\partial t}-\sum_{i=1}^{n}\left(\frac{\partial v^i}{\partial x^i}\frac{1}{a(t)}+\frac{\partial u}{\partial x^i}\frac{v^i}{a(t)(c^2-u)}\right),\quad t>0, \\
&\rho(0,\alpha)=\varepsilon\rho_0(\alpha),\quad \alpha\in\mathbb{R}^n.
\end{aligned}\right.
\end{equation}

Solving  \eqref{3.3} and combining with the second equation of system \eqref{ME2} we get easily
\begin{align}\label{3.4}
\rho(t,\alpha)=\varepsilon\rho_0(\alpha)\times\exp\left(-\int_{0}^{t}\left[\frac{\dot{a}(s)}{a(s)}\left(n-\sum_{i=1}^{n}\frac{(v^i)^2}{c^2}\right)+\frac{1}{a(s)}\sum_{i=1}^{n}\frac{\partial v^i}{\partial x^i}\right]\mathrm{d}s\right).
\end{align}
We can further get
\begin{align}\label{3.5}
\frac{\partial \rho(t,\alpha)}{\partial x^j}=\rho(t,\alpha)\int_{0}^{t}\left(\frac{\dot{a}(s)}{a(s)}\cdot\frac{\partial v^i}{\partial x^j}\cdot \frac{2v^i}{c^2}-\frac{1}{a(s)}\sum_{i=1}^{n}\frac{\partial^2 v^i}{\partial x^i\partial x^j}\right)\mathrm{d}s+\frac{\partial\rho_0}{\partial x^j}\frac{\rho}{\rho_0}.
\end{align}
In above, $\frac{\partial \rho_0(\alpha)}{\partial x^j}=\frac{\partial \rho_0(\alpha)}{\partial \alpha^l}\frac{\partial\alpha^l}{\partial x^j}$ can be discussed as the same as $\frac{\partial v^{i}(t,\alpha)}{\partial x^j}$ in Section \ref{sec:2}.
\begin{Remark}
From \eqref{3.4}, we can see that $\rho(t,x)>0$ in the existence domain of the classical solution, that means that $\ln\rho$ in \eqref{3.3} is well defined.
\end{Remark}
Based on above calculations, we are ready to prove Theorem \ref{thm:2}.
\begin{proof}
In the proof of Theorem \ref{thm:1}, when $a(t)$ satisfies Assumption $H_{1}$ or $H_{2}$, we know that $\det\left(\frac{\partial x^{i}(t,\alpha)}{\partial\alpha^j}\right)>0$ for all $t\in[0,+\infty)$. By Lemma \ref{rem:2.1}, the mapping $x=x(t,\alpha)$ is a global $C^{2}$
diffeomorphism. Consequently, the entries of the Jacobian matrix and Hessian matrix of $\alpha$ with respect to $x$, for fixed $t\in[0,+\infty)$, are all continuous functions.

According to the solution formulas of $\rho$ and $\partial \rho$, to demonstrate that $\rho$ and $\partial \rho$ are continuous on $[0,+\infty)\times \mathbb R^n$, we need to show that the integral functions of \eqref{3.4} and \eqref{3.5} are continuous on spacetime domain $[0,t]\times \mathbb R^n$ for all $0\leqslant t< +\infty$.



It is easy to see that the integral functions in \eqref{3.4} and \eqref{3.5} depend on the $v$, $\partial v$ and $\partial^2v$. By direct calculations, we have
\begin{equation}\label{v/x}
\frac{\partial v^i}{\partial x^j}=\sum_{l=1}^{n}\frac{\partial v^i}{\partial\alpha^l}\cdot\frac{\partial\alpha^l}{\partial x^j},
\end{equation}
and
\begin{equation}\label{second-order}
\frac{\partial^{2}v^i}{\partial x^j\partial x^k}=\sum_{l=1}^{n}\left[\sum_{s=1}^{n}\left(\frac{\partial^{2}v^i}{\partial \alpha^l\partial \alpha^s}\cdot\frac{\partial\alpha^s}{\partial x^k}\right)\cdot\frac{\partial\alpha^l}{\partial x^j}+\frac{\partial v^i}{\partial \alpha^l}\cdot\frac{\partial^{2}\alpha^l}{\partial x^j\partial x^k}\right],
\end{equation}
where
\begin{align}
\frac{\partial v^i}{\partial\alpha^l}(t,\alpha)&=\frac{c(\frac{\partial g_0^i(\alpha)}{\partial\alpha^l})a^{-1}(t)+cf_0(\alpha)[(\frac{\partial g_0^i(\alpha)}{\partial\alpha^l})f_0(\alpha)-g_0^i(\alpha)(\frac{\partial f_0(\alpha)}{\partial\alpha^l})]a^{-3}(t)}{[1+f_0^2(\alpha)a^{-2}(t)]^{3/2}}, \label{v for a} \\
\frac{\partial^{2}v^i}{\partial \alpha^l\partial \alpha^s}(t,\alpha)&=\frac{c(\frac{\partial^{2}g_0^i(\alpha)}{\partial \alpha^l\partial \alpha^s})a^{-1}(t)+c[2f_0(\alpha)(\frac{\partial f_0(\alpha)}{\partial\alpha^s})(\frac{\partial g_0^i(\alpha)}{\partial\alpha^l})+f_0^2(\alpha)(\frac{\partial^{2}g_0^i(\alpha)}{\partial \alpha^l\partial \alpha^s})]a^{-3}(t)}{[1+f_0^2(\alpha)a^{-2}(t)]^{3/2}} \nonumber\\
-&\frac{c[(\frac{\partial f_0(\alpha)}{\partial\alpha^s})(\frac{\partial f_0(\alpha)}{\partial\alpha^l}g_0^i(\alpha))+f_0(\alpha)(\frac{\partial^{2}f_0(\alpha)}{\partial \alpha^l\partial \alpha^s})g_0^i(\alpha)+f_0(\alpha)(\frac{\partial f_0(\alpha)}{\partial\alpha^l})(\frac{\partial g_0^i(\alpha)}{\partial\alpha^s})]a^{-3}(t)}{[1+f_0^2(\alpha)a^{-2}(t)]^{3/2}} \nonumber\\
-&\frac{3cf_0(\alpha)(\frac{\partial f_0(\alpha)}{\partial\alpha^s})\{(\frac{\partial g_0^i(\alpha)}{\partial\alpha^l})a^{-3}(t)+f_0(\alpha)[(\frac{\partial g_0^i(\alpha)}{\partial\alpha^l})f_0(\alpha)-g_0^i(\alpha)(\frac{\partial f_0(\alpha)}{\partial\alpha^l})]a^{-5}(t)\}}{[1+f_0^2(\alpha)a^{-2}(t)]^{5/2}}. \label{v/aa}
\end{align}
In above
\begin{align}
\frac{\partial^{2}f_0(\alpha)}{\partial \alpha^l\partial \alpha^s}&=\frac{c^2\varepsilon(\frac{\partial^{2}|v_0(\alpha)|}{\partial \alpha^l\partial \alpha^s})}{[c^2-\varepsilon^2|v_0(\alpha)|^2]^{3/2}}+\frac{3\varepsilon^3|v_0(\alpha)|^2(\frac{\partial|v_0(\alpha)|}{\partial\alpha^s})}{[c^2-\varepsilon^2|v_0(\alpha)|^2]^{5/2}}, \label{f_0two}\\
\frac{\partial^{2}g_0^i(\alpha)}{\partial \alpha^l\partial \alpha^s}&=\frac{c^2\varepsilon(\frac{\partial^{2}v_0^i(\alpha)}{\partial \alpha^l\partial \alpha^s})-\varepsilon^3[2|v_0(\alpha)|(\frac{\partial|v_0(\alpha)|}{\partial\alpha^s})(\frac{\partial v_0^i(\alpha)}{\partial\alpha^l})+|v_0(\alpha)|^2(\frac{\partial^{2}v_0^i(\alpha)}{\partial \alpha^l\partial \alpha^s})]}{[c^2-\varepsilon^2|v_0(\alpha)|^2]^{3/2}} \nonumber\\
+&\frac{\varepsilon^3[(\frac{\partial|v_0(\alpha)|}{\partial\alpha^s})(\frac{\partial|v_0(\alpha)|}{\partial\alpha^l})v_0^i(\alpha)+|v_0(\alpha)|(\frac{\partial^{2}v_0^i(\alpha)}{\partial \alpha^l\partial \alpha^s})v_0^i(\alpha)+|v_0(\alpha)|(\frac{\partial|v_0(\alpha)|}{\partial\alpha^l})(\frac{\partial v_0^i(\alpha)}{\partial\alpha^s})]}{[c^2-\varepsilon^2|v_0(\alpha)|^2]^{3/2}} \nonumber\\
+&\frac{3\varepsilon^2|v_0(\alpha)|(\frac{\partial |v_0|(\alpha)}{\partial\alpha^s})\{c^2\varepsilon(\frac{\partial v_0^i(\alpha)}{\partial\alpha^l})+\varepsilon^3|v_0(\alpha)|[v_0^i(\alpha)(\frac{\partial |v_0(\alpha)|}{\partial\alpha^l})-|v_0(\alpha)|(\frac{\partial v_0^i(\alpha)}{\partial\alpha^l})]\}}{[c^2-\varepsilon^2|v_0(\alpha)|^2]^{5/2}}. \label{g_0two}
\end{align}

When $a(t)$ satisfies Assumptions $H_1$ and $H_2$, the continuity and boundedness properties of $\partial v$ has been obtained in the proof of Theorem \ref{thm:1}. From \eqref{second-order}, the second order derivatives of $v$ rely on the initial data and the Jacobian and Hessian matrices of $\alpha$ with respect to $x$. These insights imply the integral functions in \eqref{3.4} and \eqref{3.5} are continuous on $[0,t]$ for all $0\leqslant t<+\infty$.

Thus, we can get that $(\rho(t,x),v(t,x))\in C^{1}([0,+\infty)\times \mathbb R^n)$.

In the following, we further show that $\rho(t,x)$ is not only a continuous function but also decays in relation to $a(t)$.

Since the initial data $v_0(x)\in C^2(\mathbb R^n)$ and are bounded, from \eqref{3.6} and \eqref{v/x}, it is easy to see that
\begin{equation}
|v^i|\leqslant C_1a^{-1}(t),
\end{equation}
\begin{equation}
\left|\frac{\partial v^i}{\partial \alpha^l}\right|\leqslant C_2a^{-1}(t)+C_3a^{-3}(t),
\end{equation}
where $C_i$ denote positive constants for $i=1,2,3$.


From \eqref{3.6} and \eqref{f_0estimate}-\eqref{g_0/alphaestimate}, we can find a positive constant $M$ such that
\begin{align}
\left|-\frac{\dot{a}(t)}{a(t)}\left(\sum_{i=1}^{n}\frac{(v^i)^2}{c^2}\right)+\frac{1}{a(t)}\sum_{i=1}^{n}\frac{\partial v^i}{\partial x^i}\right|&\leqslant \frac{M}{a^2(t)}\left(1+\frac{1}{a^2(t)}+\frac{\dot{a}(t)}{a(t)}\right)\leqslant \frac{2M}{a^2(t)}+\frac{M\dot{a}(t)}{a^3(t)},
\end{align}
in the second inequality, we have used  $a(t)\geqslant 1$.

Thus, By \eqref{H1inta} and \eqref{H2inta}, we can obtain that there exists $M_2$ such that for any $t\in [0,+\infty)$, we have
\begin{equation}
\int_{0}^t\left|-\frac{\dot{a}(t)}{a(t)}\left(\sum_{i=1}^{n}\frac{(v^i)^2}{c^2}\right)+\frac{1}{a(t)}\sum_{i=1}^{n}\frac{\partial v^i}{\partial x^i}\right|\mathrm{d}s\leqslant M_2+M_2\left(1-\frac{1}{2a^2(t)}\right)\leqslant 2M_2.
\end{equation}

For the remaining integral of \eqref{3.4}, it is easy to get
\begin{equation}
-\int_{0}^t\frac{\dot{a}(s)}{a(s)}n\mathrm{d}s=-n\ln a(t).
\end{equation}
Therefore, we can obtain
\begin{equation}
|\rho(t,\alpha)|\leqslant \varepsilon|\rho_0(\alpha)|e^{2M_2}a^{-n}(t),
\end{equation}
and
\begin{equation}
\lim_{t\to\infty}|\rho(t,\alpha)|\leqslant\lim_{t\to\infty}\varepsilon|\rho(\alpha)|e^{2M_2}a^{-n}(t)=0.
\end{equation}

Next, we estimate $\frac{\partial^2 \alpha^i}{\partial x^j\partial x^k}$, direct calculations give
\begin{align}
\frac{\partial^2 x^i}{\partial \alpha^j\partial \alpha^k}&=c\frac{\partial^2 g_0^i}{\partial\alpha^j\partial\alpha^k}\int_0^t\frac{a^{-2}(s)}{\sqrt{1+f_0^2(\alpha)a^{-2}(s)}}\mathrm{d}s-c\left(\frac{\partial g_0^i}{\partial\alpha^j}\cdot f_0\cdot\frac{\partial f_0}{\partial\alpha^k}\right. \nonumber\\
&\left.+\frac{\partial g_0^i}{\partial\alpha^k}\cdot f_0\cdot\frac{\partial f_0}{\partial\alpha^j}+g_0^i\cdot\frac{\partial f_0}{\partial\alpha^k}\cdot\frac{\partial f_0}{\partial\alpha^j}+g_0^i\cdot f_0\cdot\frac{\partial^2 f_0}{\partial\alpha^j\partial\alpha^k}\right)\int_0^t\frac{a^{-4}(s)}{(1+f_0^2(\alpha)a^{-2}(s))^{3/2}}\mathrm{d}s \nonumber\\
&+3g_0^i(f_0)^2\cdot\frac{\partial f_0}{\partial\alpha^j}\cdot\frac{\partial f_0}{\partial\alpha^k}\int_0^t\frac{a^{-6}(s)}{(1+f_0^2(\alpha)a^{-2}(s))^{5/2}}\mathrm{d}s,
\end{align}
by \eqref{f_0estimate}-\eqref{g_0/alphaestimate}, \eqref{H1inta}, \eqref{H2inta} and \eqref{f_0two}-\eqref{g_0two}, we can find a positive constant $M_3$, such that for any $t\in [0,+\infty)$, we have
\begin{equation}
\left|\frac{\partial^2 x^i}{\partial \alpha^j\partial \alpha^k}\right|\leqslant M_3,
\end{equation}
by Lemma \ref{2.4}, we obtain that there exists a positive constant $M_4$ such that.
\begin{equation}
\left|\frac{\partial^2 \alpha^i}{\partial x^j\partial x^k}\right|\leqslant M_4,
\end{equation}
Therefore, through \eqref{v/x}-\eqref{v/aa}, there exists a positive constant $M_5$ such that
\begin{equation}
\left|\frac{\dot{a}(s)}{a(s)}\cdot\frac{\partial v^i}{\partial x^j}\cdot \frac{v^i}{c^2}-\frac{1}{a(s)}\sum_{i=1}^{n}\frac{\partial^2 v^i}{\partial x^i\partial x^j}\right|\leqslant M_5\left(\frac{\dot{a}(t)}{a^3(t)}+\frac{\dot{a}(t)}{a^5(t)}+\frac{1}{a^2(t)}+\frac{1}{a^4(t)}+\frac{1}{a^6(t)}\right),
\end{equation}
which implies
\begin{equation}
\left|\int_{0}^{t}\left(\frac{\dot{a}(s)}{a(s)}\cdot\frac{\partial v^i}{\partial x^j}\cdot \frac{2v^i}{c^2}-\frac{1}{a(s)}\sum_{i=1}^{n}\frac{\partial^2 v^i}{\partial x^i\partial x^j}\right)\right|\leqslant M_6.
\end{equation}
where $M_6$ denotes a positive constant.

From the previous discussions and \eqref{3.5}, we can further get that 
\begin{equation}
\lim_{t\to+\infty}\frac{\partial\rho}{\partial x^i}(t,\alpha)=0.
\end{equation}

\end{proof}

\section{Proof of Theorem \ref{thm:3}}	\label{sec:4}
Combining Theorems \ref{thm:1} and \ref{thm:s}, we can conclude that when $a(t)$ satisfies Assumption $H_3$ or Assumption $H_4$ and $\displaystyle\inf_{\alpha}\det\left(\frac{\partial v_0}{\partial \alpha}\right)<0$, the velocity $v$ must blow up in finite time. While we do not know whether the density $\rho$ blows up or not since the solution formula is very complicated. Thus, we consider the spherically symmetric solution of system \eqref{ME1} in the following.

Let
\begin{equation*}
\rho(t,x)=\rho(t,r),\, v^{i}(t,x) = \textbf{v}(t,r)\frac{x^{i}}{r},
\end{equation*}
where
\begin{equation*}
\displaystyle r=\left(\sum_{i=1}^{n}{(x^i)^{2}}\right)^{\frac{1}{2}}.
\end{equation*}

The system \eqref{ME1} turns into
\begin{equation}
\left\{\begin{aligned}
&\frac{\partial}{\partial t}\left(\frac{a(t)\rho}{c^{2}-\textbf{v}^{2}}\right) + \frac{\partial}{\partial r}\left(\frac{\rho \textbf{v}}{c^{2}-\textbf{v}^{2}}\right)+ \left(\frac{n\dot{a}(t)\rho}{c^{2}-\textbf{v}^{2}} - \frac{\dot{a}(t)\rho}{c^{2}}\right) = 0,\\
&\frac{\partial}{\partial t}\left(\frac{a(t)\rho \textbf{v}}{c^{2}-\textbf{v}^{2}}\right) + \frac{\partial}{\partial r}\left(\frac{\rho \textbf{v}^2}{c^{2}-\textbf{v}^{2}}\right) + \left(\frac{n\dot{a}(t)\rho \textbf{v}}{c^{2}-\textbf{v}^{2}}\right) = 0,\\
&t=0: \rho(0,r)=\varepsilon \rho_0(r),\quad \textbf{v}(0,r)=\varepsilon \textbf{v}_0(r).
\end{aligned}\right.
\end{equation}

In the existence domain of the classical solution, the above system is equivalent to
\begin{equation}\label{5.2}
\begin{cases}
\frac{\partial\ln\rho}{\partial t}+\frac{\textbf{v}}{a(t)}\frac{\partial\ln\rho}{\partial r}+\frac{\dot{a}(t)}{a(t)}\left(\frac{\textbf{v}^2}{c^2}+n\right)+\frac{2\textbf{v}}{c^2-\textbf{v}^2}\frac{\partial\textbf{v}}{\partial t}+\frac{1}{a(t)}\left(\frac{2\textbf{v}^{2}}{c^2-\textbf{v}^2}+1\right)\frac{\partial\textbf{v}}{\partial r} = 0,\\
\frac{\partial \textbf{v}}{\partial t}+\frac{\textbf{v}}{a(t)}\frac{\partial \textbf{v}}{\partial r}+\frac{\dot{a}(t)}{a(t)}\textbf{v}-\frac{\dot{a}(t)}{a(t)}\frac{\textbf{v}^3}{c^2}=0,\\
t=0: \rho(0,r)=\varepsilon \rho_0(r),\quad \textbf{v}(0,r)=\varepsilon \textbf{v}_0(r).
\end{cases}
\end{equation}

It is easy to see the second equation is the same as the relativistic Burgers equation in dimension one, which has been discussed in the proof of Theorem \ref{thm:1}. Thus, we only need to investigate the first equation on density.

By the method of characteristics, we define the characteristic line $r=r(t,\alpha)$ as
\begin{equation}
\left\{\begin{aligned}
&\frac{\mathrm{d}r(t,\alpha)}{\mathrm{d}t}=\frac{\textbf{v}(t,r(t,\alpha))}{a(t)},\quad t>0, \\
&r(0,\alpha)=\alpha.
\end{aligned} \right.
\end{equation}

Then along the characteristic line $r(t,\alpha)$, we have
\begin{equation}
\left\{\begin{aligned}	&\frac{\mathrm{d}\ln\rho}{\mathrm{d}t}=-\left[\frac{\dot{a}(t)}{a(t)}\left(\frac{\textbf{v}^2}{c^2}+n\right)+\frac{2\textbf{v}}{c^2-\textbf{v}^2}\frac{\partial\textbf{v}}{\partial t}+\frac{1}{a(t)}\left(\frac{2\textbf{v}^{2}}{c^2-\textbf{v}^2}+1\right)\frac{\partial\textbf{v}}{\partial r}\right],\\
&\rho(0,\alpha)=\varepsilon\rho_0(\alpha).
\end{aligned}\right. \label{3.14}
\end{equation}

Solving  \eqref{3.14}, we have
\begin{equation}\label{5.19}
\rho(t,\alpha)=\varepsilon\rho_0(\alpha)\exp\left\{-\int_{0}^{t}\left[\frac{\dot{a}(s)}{a(s)}\left(n-\frac{\textbf{v}^2}{c^2}\right)+\frac{1}{a(s)}\frac{\partial\textbf{v}}{\partial r}\right]\mathrm{d}s\right\}.
\end{equation}

We can further get
\begin{equation}\label{5.6}
\frac{\partial \rho}{\partial r}(t,\alpha) =  \rho(t,\alpha) \Big\{ \int_{0}^{t}\left[\frac{\dot{a}(s)}{a(s)} \frac{2\textbf{v}}{c^2}\frac{\partial\textbf{v}}{\partial r}
- \frac{1}{a(s)}\frac{\partial^2\textbf{v}}{\partial r^2} \right]\, \mathrm{d}s
+ \frac{\partial \rho_0}{\partial r} \frac{1}{\rho_0}\Big\} .
\end{equation}

Next we calculate $ \frac{\partial^2 r}{\partial \alpha^2} $ and $ \frac{\partial^2 \textbf{v}}{\partial \alpha^2} $
\begin{align*}
\frac{\partial r}{\partial \alpha}(t,\alpha) = &1+c{f_{0}}'(\alpha)\int_{0}^{t}\frac{a^{-2}(s)}{[1+{f_{0}^{2}}(\alpha)a^{-2}(s)]^{3/2}}\mathrm{d}s,\\
\frac{\partial^2 r}{\partial \alpha^2}(t,\alpha)=&c\int_{0}^{t}\frac{[{f_{0}}''(\alpha)-3{f_{0}}(\alpha)({f_{0}}'(\alpha))^2]a^{-2}(s)+{f_{0}}''(\alpha){f_{0}}(\alpha)a^{-4}(s)}{[1+{f_{0}^{2}}(\alpha)a^{-2}(s)]^{5/2}}\mathrm{d}s,\\
\frac{\partial \textbf{v}}{\partial \alpha}(t,\alpha)=&\frac{c{f_{0}}'(\alpha)a^{-1}(t)}{[1+{f_{0}^{2}}(\alpha)a^{-2}(t)]^{3/2}},\\
\frac{\partial^2 \textbf{v}}{\partial \alpha^2}(t,\alpha)=&\frac{c{f_{0}}''(\alpha)a^{-1}(t)+c[{f_{0}}''(\alpha){f_{0}^{2}}(\alpha)-3{f_{0}}(\alpha)({f_{0}}'(\alpha)^2)]a^{-3}(t)}{[1+{f_{0}^{2}}(\alpha)a^{-2}(t)]^{5/2}},
\end{align*}
where $f_0(\alpha)=\frac{\varepsilon |\textbf{v}_0(\alpha)|}{\sqrt{c^2-\varepsilon^2|\textbf{v}_0(\alpha)|^2}}$ is first defined in \eqref{|v|^2}.

Since $ \textbf{v}_{0} $ has bounded $ C^{2} $ norm, we have
\begin{equation*}
\Vert f_{0}''\Vert_{C^0(\mathbb{R^+})}=\left\Vert\frac{c^2\varepsilon \textbf{v}_0''(c^2-\varepsilon^2\textbf{v}_0^2)+3c^2\varepsilon^3\textbf{v}_0(\textbf{v}_0')^2}{(c^2-\varepsilon^2\textbf{v}_0^2)^{5/2}}\right\Vert_{C^{0}(\mathbb R^+)}\leqslant\frac{c^4\varepsilon  Q_0+3c^2\varepsilon^3Q_0^3}{(c^2-\varepsilon^2Q_0^2)^{5/2}}.
\end{equation*}

So there exists $Q_2>0$, such that
\begin{align*}
\Vert {f_{0}}\Vert_{C^2(\mathbb{R^+})}\leqslant Q_2,
\end{align*}
and then
\begin{align*}
&\Big\Vert \frac{\partial^2 r}{\partial \alpha^2} (t,\cdot)\Big\Vert_{C^0(\mathbb{R^{+}})}\leqslant c(Q_2+3Q_2^3)\int_{0}^{t}a^{-2}(s)\mathrm{d}s+cQ_2^3\int_{0}^{t}a^{-4}(s)\mathrm{d}s,\\
&\Big\Vert \frac{\partial \textbf{v}}{\partial \alpha} (t,\cdot)\Big\Vert_{C^0(\mathbb{R^+})}\leqslant \frac{cQ_2}{a(t)}\leqslant cQ_2,\\
&\Big\Vert \frac{\partial^2 \textbf{v}}{\partial \alpha^2} (t,\cdot)\Big\Vert_{C^0(\mathbb{R^{+}})}\leqslant\frac{cQ_2}{a(t)}+\frac{4cQ_2^3}{a^3(t)}\leqslant 5cQ_2.
\end{align*}
It is easy to see
\begin{align*}
\frac{\partial \textbf{v}}{\partial r}=\frac{\partial \textbf{v}}{\partial \alpha}\cdot
\frac{\partial \alpha}{\partial r}
\end{align*}
and
\begin{align}\label{vrr}
\frac{\partial^2 \textbf{v}}{\partial r^2}=\left(\frac{\partial^2 \textbf{v}}{\partial \alpha^2}\frac{\partial r}{\partial \alpha}-\frac{\partial \textbf{v}}{\partial \alpha}\frac{\partial^2 r}{\partial \alpha^2}\right)\cdot\left(\frac{\partial r}{\partial \alpha}\right)^{-3}.
\end{align}

To analyze the global existence and potential blowup of the classical solution, we divide the discussion into two cases based on the monotonicity of $\textbf{v}_0(\alpha)$.

\textbf{Case I.}\quad if $\displaystyle\inf_{\alpha}\, \textbf{v}_0'(\alpha)\geqslant0$.

When $\textbf{v}_0'(\alpha)=0$ for some $\alpha$, then along such characteristic line, we have
\begin{align*}
\frac{\partial r}{\partial \alpha}(t,\alpha)=1, \\
\frac{\partial \textbf{v}}{\partial \alpha}(t,\alpha)=0.
\end{align*}
We can easily get $\frac{\partial \textbf{v}}{\partial r}(t,\alpha)=0$ and $\frac{\partial^2 \textbf{v}}{\partial r^2}(t,\alpha)$ are bounded since $\frac{\partial^2 \textbf{v}}{\partial \alpha^2}(t,\alpha)$ is bounded.

When $\textbf{v}_0'(\alpha)>0$, we have
$$ 	\frac{\partial \textbf{v}}{\partial r}=\frac{\partial \textbf{v}}{\partial \alpha}\cdot
\frac{\partial \alpha}{\partial r}>0 $$
and

$$\displaystyle\lim_{t\to+\infty}	\frac{\partial r}{\partial \alpha}(t,\alpha)=+\infty,$$
we then get  $\frac{\partial \textbf{v}}{\partial r},\frac{\partial^2 \textbf{v}}{\partial r^2}$ are bounded for all $t\in[0,+\infty)$ in this case since
\begin{align*}
\lim_{t\to+\infty}\frac{\partial \textbf{v}}{\partial r}&=\lim_{t\to+\infty}\left(\frac{\partial \textbf{v}}{\partial \alpha}\cdot
\frac{\partial \alpha}{\partial r}\right)=0, \\
\lim_{t\to+\infty}\frac{\partial ^2\textbf{v}}{\partial r^2}&=\lim_{t\to+\infty}\Big[\left(\frac{\partial^2 \textbf{v}}{\partial \alpha^2}\cdot\frac{\partial r}{\partial \alpha}-\frac{\partial \textbf{v}}{\partial \alpha}\frac{\partial^2 r}{\partial \alpha^2}\right)\left(\frac{\partial r}{\partial \alpha}\right)^{-3}\Big]\\
&=\lim_{t\to+\infty}\Big[\frac{\partial^2 \textbf{v}}{\partial \alpha^2}\left(\frac{\partial r}{\partial \alpha}\right)^{-2}\Big]-\frac{1}{3}\lim_{t\to+\infty}\frac{\partial \textbf{v}}{\partial \alpha}\lim_{t\to+\infty}\frac{\frac{d}{dt}\left(\frac{\partial^2 r}{\partial \alpha^2}\right)}{(\frac{\partial r}{\partial \alpha})^2\frac{d}{dt}\left(\frac{\partial r}{\partial \alpha}\right)}=0,
\end{align*}
here we have used $	\frac{\partial \textbf{v}}{\partial \alpha}(t,\alpha),	\frac{\partial ^2\textbf{v}}{\partial \alpha^2}(t,\alpha)$ are bounded and the L'Hopital's rule.

Since the initial data $\textbf{v}_0(x)\in C^2(\mathbb R^n)$ and are bounded, combining \eqref{u}, we have
\begin{equation*}
|\textbf{v}(t,\alpha)|^2\leqslant M\varepsilon^2,
\end{equation*}
where M is a positive constant.

Set $ M_8(t,\alpha)= n-\frac{\textbf{v}^2}{c^2}$, there exists a positive constant $0<C_6<n$ such that
\begin{equation*}
|M_8(t,\alpha)|\geqslant n-\frac{M\varepsilon^2}{c^2}\geqslant C_6,
\end{equation*}
provided $\varepsilon$ is suitably small.

Then we have
\begin{align}
|\rho(t,\alpha)|&=\varepsilon|\rho_0(\alpha)|e^{-\int_{0}^{t}\frac{\dot{a}(s)}{a(s)}M_8\mathrm{d}s}e^{-\int_{0}^{t}\frac{1}{a(s)}\frac{\partial\textbf{v}}{\partial r}\mathrm{d}s}\\ \nonumber
&\leqslant\varepsilon\rho_0(\alpha)a^{-C_6}(t),\nonumber
\end{align}
here we have used $\frac{ \partial \textbf{v}}{\partial r}\geqslant0 $.

Obviously, when $a(t)$ satisfies Assumption $H_3$, we have
\begin{equation}\label{uu}
\lim_{t\to+\infty}|\rho(t,\alpha)|\leqslant\lim_{t\to+\infty}\varepsilon|\rho_0(\alpha)|a^{-C_6}(t)=0,
\end{equation}
while when $a(t)$ satisfies Assumption $H_4$, we only get
\begin{equation}\label{uu2}
|\rho(t,\alpha)|\leqslant\varepsilon|\rho_0(\alpha)|.
\end{equation}

Combining above discussions, the integral functions in \eqref{5.6} and \eqref{5.19} are continuous and bounded for all $t\in[0,+\infty)$. Thus,
we finally get $ \rho(t,r)\in C^{1}([0,+\infty)\times \mathbb R^{+}) $ and has bounded $C^0$ norm if $\displaystyle\inf_{\alpha}\, \textbf{v}_0'(\alpha)\geqslant0$.

\textbf{Case II.} \quad if $\displaystyle\inf_{\alpha}\, \textbf{v}_0'(\alpha)<0$.

If there exists an $\alpha$ such that $\textbf{v}_0'(\alpha)<0$, we have ${f_{0}}'(\alpha)<0$.

Let $t_2$ be the blow up time of $\textbf{v}$ investigated in Section \ref{sec:2}, then we get
\begin{equation*}
\displaystyle\lim_{t\to t_2^-}\frac{\partial r}{\partial \alpha}(t,\alpha)=0^+,
\end{equation*}
and
\begin{equation*}
\lim_{t\to t_2^-}\frac{\partial \textbf{v}}{\partial r}(t,\alpha)=\lim_{t\to t_2^-}\frac{\frac{\partial \textbf{v}}{\partial \alpha}(t,\alpha)}{\frac{\partial r}{\partial \alpha}(t,\alpha)}=\lim_{t\to t_2^-}\frac{c{f_{0}}'(\alpha)a^{-1}(t)}{[1+{f_{0}^{2}}(\alpha)a^{-2}(t)]^{3/2}}\cdot\frac{1}{\frac{\partial r}{\partial \alpha}(t,\alpha)}=-\infty.
\end{equation*}
By the L'Hopital's rule, we can also get that 
\begin{equation*}
\lim_{t\to t_2^-}\frac{\frac{\partial \textbf{v}(t,\alpha)}{\partial r}}{(t_2-t)^{-1}}=\lim_{t\to t_2^-}\frac{c{f_{0}}'(\alpha)a^{-1}(t)}{[1+{f_{0}^{2}}(\alpha)a^{-2}(t)]^{3/2}}\lim_{t\to t_2^-}\frac{-1}{\frac{cf_0'(\alpha)a^{-2}(t)}{[1+{f_{0}^{2}}(\alpha)a^{-2}(t)]^{3/2}}}=-a(t_2)<0,
\end{equation*}
which means that the blowup rate of $\nabla v$ is $O(\frac{1}{t_2-t})$.

By \eqref{5.19}, we can get
\begin{equation*}
\lim_{t\to t_2^-}\rho(t,\alpha)=+\infty,
\end{equation*}
since $\int_{0}^{t_2}(t_2-t)^{-1}\mathrm{d}t=+\infty$. Thus, in this case, $\rho$ blows up, when $ t$ tends to $t_2$.

At the end of this section, we have several remarks on the spherically symmetric solution.
\begin{Remark}
When $a(t)$ satisfies Assumption $H_4$,
\begin{equation*}	\label{5.20}
\rho(t,\alpha)=\varepsilon\rho_0(\alpha)e^{-\int_{0}^{t}\frac{\partial\textbf{v}}{\partial r}\mathrm{d}s}.
\end{equation*}

If $\displaystyle\inf_{\alpha}\, \textbf{v}_0'(\alpha)\geqslant0$ and when $ t $ tends to $+\infty$, we cannot obtain the decay estimate similar to \eqref{uu}. This can also be checked by the following example.

If we let $ \rho_0(r) = 1 $, then
\begin{equation*}
\left\{
\begin{aligned}
	&\rho(t,r)\equiv \varepsilon,\\
	&\textbf{v}(t,r)\equiv 0,
\end{aligned}\right.
\end{equation*}
is a solution to system \eqref{5.2}. However, $\rho$ satisfies
\begin{equation*}
\lim_{t\to+\infty}\rho(t,\alpha)=\varepsilon\neq 0.
\end{equation*}
\end{Remark}

\begin{Remark}
By \eqref{5.6}, we have
\begin{align*}
\left(\frac{\partial^2 \textbf{v}}{\partial \alpha^2} \frac{\partial r}{\partial \alpha}\right. - &\left.\frac{\partial \textbf{v}}{\partial \alpha} \frac{\partial^2 r}{\partial \alpha^2} \right) = \left( \frac{c{f_{0}}''(\alpha)a^{-1}(t) + c[{f_{0}}''(\alpha){f_{0}^{2}}(\alpha) - 3{f_{0}}(\alpha)({f_{0}}'(\alpha))^2]a^{-3}(t)}{[1+{f_{0}^{2}}(\alpha)a^{-2}(t)]^{5/2}} \right) \\
&\times \left( 1 + c{f_{0}}'(\alpha) \int_{0}^{t} \frac{a^{-2}(s)}{[1+{f_{0}^{2}}(\alpha)a^{-2}(s)]^{3/2}} \mathrm{d}s \right)- \left( \frac{c{f_{0}}'(\alpha)a^{-1}(t)}{[1+{f_{0}^{2}}(\alpha)a^{-2}(t)]^{3/2}} \right) \\
&\times \left( c \int_{0}^{t} \frac{[{f_{0}}''(\alpha)-3{f_{0}}(\alpha)({f_{0}}'(\alpha))^2]a^{-2}(s)+{f_{0}}''(\alpha){f_{0}}(\alpha)a^{-4}(s)}{[1+{f_{0}^{2}}(\alpha)a^{-2}(s)]^{5/2}} \mathrm{d}s \right),
\end{align*}
because the orders of the two terms on the right-hand side are as the same as $\frac{1}{a(t)}\int_{0}^{t}\frac{1}{a^2(s)}\mathrm{d}s$, however, the sign of their coefficients cannot be determined. Therefore, we cannot conclude whether
$\frac{\partial \rho}{\partial r}$ is bounded or not when $t$ tends to $+\infty$.
\end{Remark}

\begin{Remark} From the spherically symmetric solution, if $\displaystyle\inf_{\alpha}\, \textbf{v}_0'(\alpha)<0$, we find that the density will concentrate in finite time, namely $\rho$ itself blows up in finite time, while $ \textbf{v} $ is bounded and its derivative blows up.
\end{Remark}

\noindent{\Large {\bf Acknowledgements.}} The authors would like to thank the anonymous reviewer for providing valuable comments and suggestions. This work is partially supported by the Outstanding Youth Fund of Zhejiang Province (Grant No. LR22A010004), the NSFC (Grant No. 12071435), and the Fundamental Research Fund of Zhejiang Sci-Tech University.
\\
\noindent{\Large {\bf Data availability statement.}} No new data were created or analysed in this study.\\

\noindent{\Large {\bf Conflict of interest}} The authors declare that they have no conflict of interest to this work.\\

\noindent{\Large {\bf Ethical approval}} Not applicable.\\

\noindent{\Large {\bf Consent to participate}} Not applicable

\end{CJK}

\end{document}